\newtheorem{theorem}{Theorem}[section]
\newtheorem{lemma}[theorem]{Lemma}
\newtheorem{proposition}[theorem]{Proposition}
\theoremstyle{definition}
\newtheorem{remark}[theorem]{Remark}
\newcommand{\litem}[1][\vrule width0pt]%
{\qquad\quad\begin{minipage}{2mm} \item #1 \end{minipage}}
\newcommand{\rk}{\operatorname{rk}}
\newcommand \Ker{\mathrm{Ker}\,}
\def\id{\operatorname{id}}      
\def\Ad{\operatorname{Ad}}
\def\ad{\operatorname{ad}}      
\def\dim{\operatorname{dim}}      
\def\End{\operatorname{End}}        
\def\Span{\operatorname{Span}}      
\DeclareMathOperator{\Img}{Im}
\def\ve{\varepsilon}
\def\R{\mathbb R} 
\def\ag{\mathfrak a}
\def\g{\mathfrak g}
\def\h{\mathfrak h}
\def\H{\mathfrak H}
\def\m{\mathfrak m}
\def\n{\mathfrak n}
\def\so{\mathfrak{so}}
\def\sl{\mathfrak{sl}}
\def\ve{\mathfrak v}
\def\va{\varphi}
\def\z{\mathfrak z}
\def\<{\langle}                     
\def\>{\rangle}
\def\ip{\<\cdot,\cdot\>}
\def\a{\alpha}
\def\b{\beta}
\def\la{\lambda}
\def \ve{\varepsilon}
\DeclareMathOperator{\Tr}{Tr \,}
\def\t{\mathfrak t}
\begin{document}

\author[Cairns]{Grant Cairns}
\address{Department of Mathematics and Statistics, La Trobe University, Melbourne, Australia 3086}
\email{G.Cairns@latrobe.edu.au}

\author[Hini\'c Gali\'c]{Ana Hini\'c Gali\'c}
\email{A.HinicGalic@latrobe.edu.au}

\author[Nikolayevsky]{Yuri Nikolayevsky}
\email{Y.Nikolayevsky@latrobe.edu.au}

\author[Tsartsaflis]{Ioannis Tsartsaflis}
\email{itsartsaflis@students.latrobe.edu.au}

\subjclass[2010]{Primary 53C22}

\thanks{This project was supported by ARC Discovery Grant DP130103485}

\title[geodesics]{Geodesic Bases for Lie Algebras}

\begin{abstract}
For finite dimensional real Lie algebras, we investigate the existence of an inner product having a basis comprised of geodesic elements. We give several existence and non-existence results in certain cases: unimodular solvable Lie algebras having an abelian nilradical, algebras having an abelian derived algebra, algebras having a codimension one ideal  of a particular kind, nonunimodular algebras of dimension $\leq 4$, and unimodular algebras of dimension $5$. \end{abstract}

\maketitle

\section{Introduction}

 Consider a finite dimensional real Lie group $G$ with Lie algebra $\g$. For a given inner product $\ip$ on $\g$,  a nonzero element $X\in\g$ is a \emph{geodesic element} if the corresponding left-invariant vector field on $G$ is a geodesic element field, relative to the left-invariant Riemannian metric on $G$ determined by $\ip$; see \cite{Ka,CKM,KNV,KS,CHGN1,CHGN2}. 
A simple algebraic condition for $X$ to be a geodesic element is recalled in Section~\ref{S:prelim}. For a given inner product on $\g$, we will say that a basis $\{X_1,\dots,X_n\}$ for $\g$ is a \emph{geodesic basis} if each of the elements $X_i$ is a geodesic element.

It is easy to see that if a  Lie algebra $\g$ possesses an inner product with an orthonormal geodesic basis, then $\g$ is necessarily unimodular. All semisimple Lie algebras have an inner product  with an orthonormal geodesic basis \cite{KS}, and so too do all nilpotent Lie algebras \cite{CLNN}. It was proved in \cite{CLNN} that for every unimodular Lie algebra of dimension $\leq 4$, every inner product  has an orthonormal geodesic basis.  An example was given in \cite{CLNN} of  a 5-dimensional unimodular Lie algebra that has no orthonormal geodesic basis for any inner product; nevertheless this algebra does have a (nonorthonormal) geodesic basis for a certain inner product. This raises three natural open questions (the first one was posed in \cite{CLNN}):
\begin{enumerate}
\item[1.] Does every unimodular Lie algebra possess an inner product having a geodesic basis?
\item[2.] Which unimodular Lie algebras possess an inner product having an orthonormal geodesic basis?
\item[3.] Which nonunimodular Lie algebras possess an inner product having a  geodesic basis?
\end{enumerate}

The present paper aims to give further results, answering the above questions in certain cases. Our main results concern the following 5 cases:
\begin{itemize}
\item  Unimodular solvable Lie algebras having an abelian nilradical.
\item  Certain Lie algebras having an abelian derived algebra.
\item  Lie algebras having a codimension one ideal  of a particular kind.\item  Non-unimodular Lie algebras of dimension $\leq 4$.
\item Unimodular Lie algebras of dimension $5$.
\end{itemize}

Let us state these main results. 

\begin{theorem}\label{th:nilabel}
If $\g$ is a unimodular solvable Lie algebra with an abelian nilradical $\n$, then there exists an inner product on $\g$ having a geodesic basis.
\end{theorem}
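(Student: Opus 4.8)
The plan is to reduce the existence of a geodesic basis to a problem in linear algebra about the operators $\ad_A|_\n$, and then to build a suitable inner product together with an adapted basis. Recall first the criterion (restated in Section~\ref{S:prelim}) that a nonzero $X\in\g$ is a geodesic element for $\ip$ precisely when $\<[X,Y],X\>=0$ for all $Y\in\g$, equivalently when $(\ad_X)^*X=0$, where $(\ad_X)^*$ denotes the $\ip$-adjoint of $\ad_X$. I would then record the relevant structure. Since $\g$ is solvable, $[\g,\g]$ is nilpotent, so $[\g,\g]\subseteq\n$; fixing a complement $\ag$ with $\g=\ag\oplus\n$, the subspace $\n$ is abelian, and for $A,B\in\ag$ one has $[\ad_A|_\n,\ad_B|_\n]=\ad_{[A,B]}|_\n=0$ because $[A,B]\in\n$ acts trivially on $\n$; thus $\{\ad_A|_\n:A\in\ag\}$ is a commuting family. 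Since $\ad_A$ acts trivially on $\g/\n$, unimodularity gives $0=\Tr\ad_A=\Tr(\ad_A|_\n)$ for every $A$, so each $\ad_A|_\n$ is traceless.

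Because $[X,\g]\subseteq\n$ for every $X$, the geodesic condition $(\ad_X)^*X=0$ says exactly that the covector $\<X,\cdot\>$ annihilates $[X,\g]\subseteq\n$. Hence it suffices to exhibit a basis $\{X_i\}$ of $\g$ together with covectors $\xi_i:=\<X_i,\cdot\>$ vanishing on $[X_i,\g]$ such that the Gram matrix $G_{ij}=\xi_i(X_j)$ is symmetric and positive definite; the inner product is then recovered from $G$. I would flag here that one cannot simply take $\ag\perp\n$: with that choice every element of $\ag$ is automatically geodesic, but the remaining basis vectors would have to lie in $\n$ and make each quadratic form $Y\mapsto\<\ad_A Y,Y\>$ vanish, which amounts to simultaneously zero-diagonalizing the commuting traceless family $\{\ad_A|_\n\}$ by a single change of basis. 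Already for $\operatorname{diag}(1,-1,0),\operatorname{diag}(1,0,-1)$ on $\R^3$ this is impossible: a short computation reduces it to the nonexistence of nonzero reals $x_1,x_2,x_3$ with $\sum_i x_i=0$ and $\sum_i x_i^{-1}=0$. Consequently $\ip$ must pair $\ag$ with $\n$ nontrivially, and the vectors spanning the $\n$-directions must carry $\ag$-components.

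The construction therefore takes the $\ag$-part of the basis to be geodesic vectors $A_\alpha$, imposing the linear conditions $A_\alpha\perp[A_\alpha,\g]$ on $\ip$, and the remaining vectors in the mixed form $Z_i=e_i+V_i$ with $V_i\in\ag$, where $\{e_i\}$ is adapted to the common real (generalized) eigenspace decomposition of the commuting family. The nilpotent parts of the $\ad_A|_\n$ contribute nothing to the trace, so the trace-zero condition is carried by the real characters $\chi_i$ and, via $\sum_i\chi_i=0$, is precisely what permits the cross terms between $\ag$ and $\n$ forced by the geodesic equations to be balanced. The geodesic equations for the $Z_i$ impose relations coupling the $\ag$--$\n$ block of $\ip$ to its $\n$--$\n$ block, some with negative sign (for instance of the representative form $\<V_i,e_j\>=-\<e_i,e_j\>$).

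The hard part will be to show that these coupled linear conditions on $\ip$ are met simultaneously by a genuinely \emph{positive-definite} inner product: the off-diagonal $\ag$--$\n$ couplings are forced to be sizeable, indeed partly negative, so positive definiteness is far from automatic and is exactly where unimodularity must be used. I expect to resolve this by choosing the $\n$-block and $\ag$-block of $\ip$ large relative to the prescribed cross terms and by exploiting $\sum_i\chi_i=0$ to control the indefinite contributions, together with an induction on $\dim\g$ that peels off either a one-dimensional summand on which the family acts trivially or a pair of opposite characters, thereby reducing to the cases already understood, namely nilpotent algebras (when the family $\{\ad_A|_\n\}$ is trivial, so $\g$ is two-step nilpotent) and $\dim\ag=1$ (a single traceless operator, handled by the orthogonal split since such an operator is similar to one with zero diagonal).
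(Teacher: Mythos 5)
Your setup (reduce to the nilradical, the commuting traceless family $A_j=\ad(Y_j)|_\n$, the geodesic condition $\<A_jX,X\>=0$ for $X\in\n$) agrees with the paper's, but the pivot of your plan is a false claim. You assert that one cannot take $\ag\perp\n$ because the family $\{A_j\}$ cannot be simultaneously ``zero-diagonalized'' by one choice of basis and inner product, and you offer $A=\operatorname{diag}(1,-1,0)$, $B=\operatorname{diag}(1,0,-1)$ on $\R^3$ as proof of impossibility. But for these two operators the \emph{standard} inner product already works: every sign vector $v=(\pm1,\pm1,\pm1)$ satisfies $\<Av,v\>=\sum_k v_k^2A_{kk}=\Tr A=0$ and likewise $\<Bv,v\>=\Tr B=0$, and the three such vectors $(1,1,1)$, $(1,1,-1)$, $(1,-1,1)$ form a basis of $\R^3$. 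Whatever your ``short computation'' leading to $\sum_ix_i=0$, $\sum_ix_i^{-1}=0$ was, it must implicitly force an extra constraint --- most plausibly that the basis be \emph{orthogonal} for the chosen inner product, which replaces congruence by conjugation. A geodesic basis need not be orthonormal, and that distinction is precisely the content of the theorem: your pair of operators generates the same family as the algebra $\g_{33}$ treated near the end of the paper, for which the paper proves that no \emph{orthonormal} geodesic basis exists for any inner product, while Theorem \ref{th:nilabel} still yields a (non-orthonormal) geodesic one.

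As a consequence, the machinery you then build to avoid the orthogonal splitting (mixed vectors $Z_i=e_i+V_i$, coupled conditions on the $\ag$--$\n$ block of the metric, positive definiteness by largeness, induction on dimension) attacks a non-problem, and in any case it is not a proof: its key step is explicitly deferred (``the hard part will be \dots I expect to resolve this by \dots''), and the proposed induction that ``peels off a pair of opposite characters'' already stumbles on character patterns such as $2,-1,-1$, which sum to zero but contain no opposite pair. The paper's actual proof keeps $\ag\perp\n$ and is a direct generalization of the sign-vector computation above: by a theorem on commuting families of operators there is a decomposition $\n=\oplus_\a\n_\a$ into common invariant subspaces with bases $B^\a$ in which each $A_j|_{\n_\a}$ is (block) upper-triangular with \emph{constant} diagonal, so each diagonal entry equals $n_\a^{-1}\Tr(A_j|_{\n_\a})$. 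Declaring the $\n_\a$ mutually orthogonal and each $B^\a$ orthonormal, every vector $Z=\sum_\a\varepsilon_\a\sqrt{n_\a}\,e_\a^{i_\a}$ (one basis vector chosen from each block, arbitrary signs $\varepsilon_\a=\pm1$) satisfies $\<A_jZ,Z\>=\sum_\a\Tr(A_j|_{\n_\a})=\Tr A_j=0$ by unimodularity, and such vectors span $\n$; no coupling between $\ag$ and $\n$ and no induction is needed.
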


Recall that given a Lie algebra $\h$ and a derivation $\varphi$ of $\h$, a Lie algebra $\h_\varphi$ of dimension $\dim(\h)+1$ is defined by retaining the structure on $\h$, introducing a new element $X$ and defining $[X,Y]:=\varphi(Y)$ for all $Y\in\h$; see \cite{Dix}. The algebra  $\h_\varphi$ might sensibly be called the \emph{suspension} of $\varphi$, by analogy with the classic construction in topology.

Consider the abelian algebra $\R^{n}$, and identity map $\id:\R^{n}\to \R^{n}$. Let $\mathcal{A}_n$ denote the algebra $\R^{n}_{\id}$. It is easy to see that there is no inner product on $\mathcal{A}_n$ with a geodesic basis; see Lemma \ref{Lemma1} below.

\begin{theorem}\label{th:rdiag}
Suppose $\g$ is a Lie algebra with an  abelian derived algebra $\g'$, and  that for all $Y \in \g$, the restriction $(\ad(Y))_{|\g'}$ is semisimple, with real eigenvalues. Then $\g$ admits an inner product with a geodesic basis  unless $\g$ is isomorphic to $\mathcal{A}_n$ for some $n\geq 1$.
\end{theorem}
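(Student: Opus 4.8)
The plan is to choose an inner product on $\g$ for which the \emph{geodesic elements span} $\g$; a geodesic basis can then be extracted. Recall that $X$ is a geodesic element if and only if $\<[X,Y],X\>=0$ for all $Y$, i.e. $X\perp[X,\g]$, and observe that $[X,\g]\subseteq[\g,\g]=\g'$ for every $X$. The hypotheses give a very clean picture of $\g'$: the operators $\ad(Y)|_{\g'}$ ($Y\in\g$) pairwise commute, since $[\ad(Y_1),\ad(Y_2)]|_{\g'}=\ad([Y_1,Y_2])|_{\g'}=0$ because $[Y_1,Y_2]\in\g'$ acts trivially on the abelian $\g'$; being also semisimple with real eigenvalues, they are simultaneously diagonalizable. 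So I would first decompose $\g'=\bigoplus_\alpha V_\alpha$ into common eigenspaces, with weights $\alpha\in(\g/\g')^*$ (they factor through $\g/\g'$ as $\g'$ acts trivially). The weight-$0$ space $V_0=\g'\cap\z(\g)$ and the nonzero weight spaces $V_\beta$ ($\beta\neq 0$) will be treated separately.

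Next I would record the easy geodesic elements. Fix a complement $\m$ to $\g'$ and take the inner product so that $\m\perp\g'$; then every $M\in\m$ is geodesic, because $[M,\g]\subseteq\g'\perp M$, and these span all of the $\g/\g'$-directions. Every element of $V_0$ is central, hence geodesic. On the other hand a nonzero $v\in V_\beta$ ($\beta\neq0$) is \emph{never} geodesic: one computes $[v,\g]=[v,\m]=\R v$, so $v\perp[v,\g]$ forces $v=0$. Thus the whole content of the theorem is to produce, for each nonzero weight $\beta$, geodesic elements whose $V_\beta$-components fill $V_\beta$; the exceptional algebra $\mathcal{A}_n$ (excluded by Lemma~\ref{Lemma1}) is exactly the case where this is impossible.

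To reach $V_\beta$ I would use elements of the form $W=h+v$ with $v\in V_\beta$ and a ``helper'' $h$ lying in a subspace $H$ on which the bracket with $V_\beta$ is controlled, exploiting the freedom to introduce a cross-term $\<h,v\>$ in the inner product. In each relevant configuration the computation of $[W,\g]$ collapses the geodesic condition to a single relation of the form $\<h,v\>+\|v\|^2=0$ (or a Cauchy--Schwarz-type inequality), solvable by a suitable cross-pairing together with a choice of $\|h\|$, and a similarity/graph construction then yields geodesic elements whose projections to $V_\beta$ are onto. There are three sources of helpers: (a) $H=V_0$ when $V_0\neq 0$; (b) $H=\ker\beta\subseteq\m$ when $\dim(\g/\g')\geq 2$, so that $\ker\beta\neq 0$; and (c) $H=V_\gamma$ for a second weight $\gamma$ when $\g'$ has at least two weights. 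The key point is that \emph{some} helper is available for every $\beta$ unless $\dim(\g/\g')=1$, $V_0=0$, and $\g'$ is a single nonzero weight space --- which is precisely $\g\cong\mathcal{A}_n$.

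To make the boundary sharp I would analyse the rank-one heart of the matter directly: for $\dim(\g/\g')=1$, write $\g=\R X\ltimes\g'$ with $A=\ad(X)|_{\g'}$, and show that the geodesic elements are exactly $(\im A)^{\perp}$ together with the null-cone $\{v\in\g':\<v,Av\>=0\}$. Choosing the inner product so that the quadratic form $v\mapsto\<v,Av\>$ is indefinite makes this cone span $\g'$, producing a geodesic basis; this is possible precisely when $A$ is not a nonzero scalar (a short Lyapunov/Hadamard-type computation shows $\langle v,Av\rangle$ can be made indefinite as soon as $A$ has two distinct eigenvalues), whereas when $A$ is a nonzero scalar the form is definite for \emph{every} inner product and the cone degenerates, recovering $\mathcal{A}_n$. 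The main obstacle I anticipate is not any single one of these mechanisms but the bookkeeping needed to assemble one positive-definite inner product realizing the reaching conditions for all weights \emph{simultaneously}, and, in the higher-rank case $\dim(\g/\g')\geq2$, controlling the $2$-cocycle $[\m,\m]\subseteq\g'$ while keeping the mixed elements $M+v$ geodesic. This compatibility/positive-definiteness analysis is where I expect the real work to concentrate.
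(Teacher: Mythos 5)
Your overall skeleton---simultaneous diagonalization of the operators $\ad(Y)|_{\g'}$, the weight decomposition $\g'=\bigoplus_\alpha V_\alpha$, ``easy'' geodesics transverse to $\g'$, mixed elements $h+v$ made geodesic by a cross-term, and the identification of $\A_n$ as the sole obstruction---is exactly the paper's strategy (the paper writes the weight spaces as $\n_\a$ with roots $\la_\a\in\t^*$, and settles your rank-one case by quoting Theorem~\ref{th:oneabel}). But your central mechanism (b) is internally inconsistent as stated: you fix the inner product so that $\m\perp\g'$ (to make every $M\in\m$ geodesic), and then for $h\in\Ker\beta\subseteq\m$, $v\in V_\beta$ you want a cross-term $\<h,v\>\neq0$; these are mutually exclusive. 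With $\m\perp\g'$ the condition coming from $[v,\m]=\R v$ reads $\<h,v\>+\|v\|^2=\|v\|^2=0$, so mechanism (b) produces no geodesics at all. This matters because (b) is the only mechanism available in the generic case (trivial centre, linearly independent weights): your mechanism (c) provably fails whenever $\beta$ and $\gamma$ are linearly independent, since then $[W,\g]\supseteq\Span(h,v)$ and the two conditions $\|h\|^2+\<h,v\>=0$, $\|v\|^2+\<h,v\>=0$ force equality in Cauchy--Schwarz, hence $h=-v$, impossible for vectors in distinct weight spaces. The repair is precisely the paper's tilt: do not put the complement orthogonal to $\g'$, but declare $\g'^\perp=\Span_a(Y_a+X'_a)$ for generic $X'_a\in\g'$. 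The easy geodesics are then the elements of $\g'^\perp$ (which still cover the $\g/\g'$ directions), and for $Z=\sum_a\mu_a(Y_a+X'_a)+\hat X$ with $\sum_a\mu_aY_a\in\Ker\la_\a$ and $\hat X\in\n_\a$, the geodesic condition collapses to the single sphere equation $\<\hat X,\pi_\a(\sum_a\mu_aX'_a)+\hat X\>=0$, whose solution set spans $\n_\a$; your cross-term $\<h,v\>$ is exactly $-\<\hat X,\pi_\a(\sum_a\mu_aX'_a)\>$, i.e.\ it lives in the tilt of $\g'^\perp$, not in a pairing of $\m$ with $\g'$.

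Second, the issue you defer---``controlling the $2$-cocycle $[\m,\m]\subseteq\g'$''---is not bookkeeping but a step requiring an actual idea, and the paper does it first: writing $[Y_1,Y_2]=\sum_{\a,i}\omega_{\a i}(Y_1,Y_2)X_{\a i}$, the Jacobi identity gives $\omega_{\a i}\wedge\la_\a=0$, so by Cartan's Lemma $\omega_{\a i}=\psi_{\a i}\wedge\la_\a$, and replacing each $Y_a$ by $Y_a-\sum_{\a,i}\psi_{\a i}(Y_a)X_{\a i}$ yields a complement with $[\t,\t]=0$. Without this normalization your ``single relation $\<h,v\>+\|v\|^2=0$'' is false: the requirement $W\perp[h,\m]$ adds further constraints that destroy the quadric picture. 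Your rank-one analysis and the characterization of $\A_n$ are sound (they amount to the diagonalizable special case of the paper's Theorem~\ref{th:oneabel}, which the paper proves for arbitrary $A$), and your simultaneity/positive-definiteness worry largely evaporates in the tilt formulation, where one only needs the $X'_a$ to be generic and the argument is run one weight space at a time. So the proposal has the right architecture, but as written its key mechanism fails and the cocycle normalization---a genuine ingredient of the paper's proof---is missing.
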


\begin{theorem}\label{th:oneabel}
Suppose a Lie algebra $\g$ has a codimension one abelian ideal. Then
$\g$ admits an inner product with a geodesic basis  unless $\g$ is isomorphic to $\mathcal{A}_n$ for some $n\geq 1$.
\end{theorem}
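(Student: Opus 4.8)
The plan is to exploit the ``almost abelian'' structure forced by the hypothesis. Writing $\g=\R X_0\oplus\m$, where $\m$ is the given codimension one abelian ideal and $X_0\notin\m$, the entire bracket is encoded in the single operator $D:=(\ad X_0)_{|\m}\in\End(\m)$; adding an element of $\m$ to $X_0$ leaves $D$ unchanged, while rescaling $X_0$ rescales $D$, so only the conjugacy class of $D$ up to a nonzero scalar is intrinsic to $\g$. I would first record the standard fact that $\g\cong\mathcal{A}_n$ precisely when $D$ is a nonzero multiple of the identity, so that the exceptional case in the statement corresponds exactly to $D=\la\,\id$ with $\la\neq0$; everything else (including $D=0$, where $\g$ is abelian and every basis is geodesic) is the generic case to be settled.

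Next I would compute which elements are geodesic for a well-chosen inner product. Normalizing so that $\m^\perp=\R X_0$ with $|X_0|=1$, and writing $h$ for the restriction of the inner product to $\m$, the criterion $\<[X,Y],X\>=0$ for all $Y$ yields exactly the two facts I need: first, $X_0$ is automatically geodesic, since $[X_0,\g]\subseteq\m$ and $\m\perp X_0$; second, a vector $X\in\m$ is geodesic if and only if $h(DX,X)=0$, because $[X,\m]=0$ and $[X,X_0]=-DX$. Hence a geodesic basis of $\g$ is obtained by adjoining to $X_0$ a basis $X_1,\dots,X_{\dim\m}$ of $\m$ consisting of isotropic vectors of the quadratic form $q(X):=h(DX,X)$, and the whole problem reduces to the purely linear-algebraic question of choosing $h$ so that $q$ admits such a basis.

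The engine of the argument is the elementary criterion that a real quadratic form possesses a basis of isotropic vectors if and only if it vanishes identically or is indefinite (for a nonzero semidefinite form the isotropic vectors fill only the radical). Since $q$ is the form of the $h$-symmetric part $S_h=\tfrac12\bigl(D+D^{*_h}\bigr)$ and $\Tr S_h=\Tr D$ is independent of $h$, I would split on the trace. If $\Tr D=0$, then for every $h$ the form $q$ has vanishing trace, so it is either identically zero or indefinite, and we are done. If $\Tr D\neq0$, say positive, then $q$ cannot vanish, so I must instead arrange that $q$ takes a negative value, which together with the positive trace forces indefiniteness. This is the main obstacle, and its resolution is the crux: for a fixed $X$ the quantity $h(DX,X)$ can be made negative by a suitable choice of $h$ whenever $DX$ and $X$ are linearly independent, and is forced to be nonnegative only when $DX=cX$ with $c\ge0$; so a usable $h$ exists unless every nonzero vector is an eigenvector of $D$ with nonnegative eigenvalue, i.e. unless $D=\la\,\id$ with $\la\ge0$, which under $\Tr D>0$ means $\la>0$. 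Combining the cases, $q$ can be made to admit an isotropic basis for some inner product unless $D=\la\,\id$ with $\la\neq0$, which is exactly the case $\g\cong\mathcal{A}_n$, where the nonexistence of a geodesic basis is already recorded in Lemma~\ref{Lemma1}.
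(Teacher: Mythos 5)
Your proposal is correct, and although the setup coincides with the paper's (the same decomposition $\g=\R X_0\oplus\m$, the same normalization $\m^\perp=\R X_0$, and the same reduction to making the quadratic form $q(X)=h(DX,X)$ on $\m$ either identically zero or indefinite, so that its isotropic vectors span $\m$), the mechanism by which you produce the required inner product is genuinely different. The paper argues by contradiction: if $\phi_G(X)=\<GAX,X\>_0$ were semidefinite for every symmetric positive definite $G$, then by connectedness of the cone of such $G$ the sign would be the same for all $G$, and an explicit rank-one perturbation $G=\<T,\cdot\>_0\,T+\varepsilon\,\id$ with $T=\|AX'\|_0X'-\|X'\|_0AX'$ (where $X'$ is chosen so that $X'$ and $AX'$ are independent) then produces a negative value of $\phi_G$, a contradiction. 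You instead exploit the invariant $\Tr S_h=\Tr D$: when $\Tr D=0$ \emph{every} inner product already works, since a traceless $h$-symmetric operator is either zero or has eigenvalues of both signs; and when $\Tr D\neq 0$ you only need one vector at which $q$ has sign opposite to the trace, which exists for a suitable $h$ whenever some $X$ fails to be a nonnegative eigenvector of $D$ --- i.e.\ unless $D=\la\,\id$ with $\la\neq 0$, which is exactly the excluded case $\mathcal{A}_n$ handled by Lemma~\ref{Lemma1}. Your route is more elementary (no connectedness argument, no perturbation computation) and yields a bonus the paper's proof does not make visible: in the trace-zero (in particular unimodular) case, every inner product admits a geodesic basis, a weak form of Proposition~\ref{P2} obtained independently. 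Two steps you assert without proof should be written out, though both are routine: the criterion that the isotropic vectors of an indefinite form span the space (the paper proves this via its hyperplane argument; alternatively diagonalize $q$ and take sums and differences of basis vectors on which $q$ has opposite signs, together with the radical), and the existence of $h$ with $h(DX,X)<0$ when $X$ and $DX$ are independent (prescribe the Gram matrix on a basis extending $\{X,DX\}$, e.g.\ with $h(X,X)=h(DX,DX)=1$ and $h(X,DX)=-\tfrac12$).
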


We will also require the \emph{Heisenberg Lie algebra} $\H_{2m+1}$ of dimension $2m+1$, which has basis $\{X_1,\dots,X_{2m+1}\}$ and relations $[X_i,X_{i+m}]=X_{2m+1}$, for $ i=1, \dots, m$. 
\begin{theorem}\label{th:oneheis}
Suppose a Lie algebra $\g$ has a codimension one ideal isomorphic to $\H_{2m+1}$. Then
\begin{enumerate}[{\rm (a)}]
  \item
  If $\g$ is nonunimodular, then no inner product on $\g$ has a geodesic basis.

  \item
  If $\g$ is unimodular, then every inner product on $\g$ has an orthonormal geodesic basis. 
\end{enumerate}
\end{theorem}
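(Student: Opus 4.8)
The first step is to identify the structure of $\g$. Choosing any $X\in\g\setminus\H$, the restriction $\varphi:=(\ad X)_{|\H}$ is a derivation of $\H=\H_{2m+1}$, so $\g$ is the suspension $\H_\varphi$. Since the centre $\R z$ of $\H$ (where $z:=X_{2m+1}$) is characteristic, $\varphi(z)=\lambda z$ for some $\lambda\in\R$, and the induced map $\bar\varphi$ on $\H/\R z\cong\R^{2m}$ satisfies $\omega(\bar\varphi u,v)+\omega(u,\bar\varphi v)=\lambda\,\omega(u,v)$, where $\omega$ is the symplectic form coming from the Heisenberg bracket. A trace computation then gives $\Tr(\ad X)=\Tr\varphi=(m+1)\lambda$, while $\Tr(\ad Y)=0$ for $Y\in\H$; hence $\g$ is unimodular if and only if $\lambda=0$. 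This dichotomy matches the two cases of the theorem, so the plan is to treat (a) as the case $\lambda\neq0$ and (b) as $\lambda=0$, using throughout the standard criterion that $X$ is geodesic if and only if $\langle[X,Y],X\rangle=0$ for all $Y$ (Section~\ref{S:prelim}).

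For part (a) I would prove the sharp statement that, for an arbitrary inner product, \emph{every} nonzero geodesic element $W$ is orthogonal to $z$; since $z\neq0$ this confines all geodesic elements to the proper hyperplane $z^\perp$, so they cannot span $\g$ and no geodesic basis exists. To see the claim, write $W=Y+tX$ with $Y\in\H$. If $t\neq0$, testing the geodesic condition against $z$ gives $\langle[W,z],W\rangle=t\lambda\langle z,W\rangle$, which forces $\langle z,W\rangle=0$ because $t\lambda\neq0$. If $t=0$ and the image $\bar W\neq0$ in $\H/\R z$, choosing $Y'\in\H$ with $\omega(\bar W,\bar Y')\neq0$ gives $\langle[W,Y'],W\rangle=\omega(\bar W,\bar Y')\langle z,W\rangle$, again forcing $\langle z,W\rangle=0$. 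Finally, a nonzero multiple of $z$ is never geodesic, since $\langle[cz,X],cz\rangle=-c^2\lambda\langle z,z\rangle\neq0$. These three cases exhaust all $W$, proving the claim.

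For part (b), with $\lambda=0$ the element $z$ is central in $\g$, hence automatically geodesic, and $\bar\varphi$ is symplectic (in particular trace-free). I would look for an orthonormal geodesic basis of the form $\{\zeta,f_1,\dots,f_{2m+1}\}$ with $\zeta=z/|z|$ and $f_i\in W:=z^\perp$. Since $z$ is central, for $f\in W$ the geodesic condition reduces to $\langle\pi_W[f,w],f\rangle=0$ for all $w\in W$, where $\pi_W$ is the orthogonal projection onto $W$; thus the task becomes the construction of an orthonormal basis of $W$ that is geodesic for the antisymmetric bilinear bracket $\beta(u,w):=\pi_W[u,w]$. Equivalently, writing $N(f)\in W$ for the $W$-vector dual to $w\mapsto\langle\beta(f,w),f\rangle$, one checks $\langle N(f),f\rangle=0$, so $N$ is a quadratic tangent vector field on the unit sphere of $W$, and a geodesic basis is precisely an orthonormal family of its zeros. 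I would produce such a family explicitly from the Heisenberg relations together with the symplectic (trace-free) structure of $\bar\varphi$, organising $W$ according to an $\omega$-compatible decomposition.

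The main obstacle is precisely this last construction in part (b): because the inner product is arbitrary, the Heisenberg basis and the symplectic splitting of $\bar\varphi$ are not orthogonal, so one cannot simply read off a geodesic basis from the defining relations. The heart of the proof is to show that unimodularity ($\lambda=0$, equivalently $\Tr\varphi=0$) is exactly what guarantees that the field $N$ admits an orthonormal basis of zeros for every metric; I expect this to require either an explicit adapted diagonalisation of a symmetric operator built from $\beta$, or a careful case analysis exploiting that $\bar\varphi\in\mathfrak{sp}(2m)$.
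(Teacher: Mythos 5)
Your part (a) is correct and is essentially the paper's argument. The paper phrases it as: when $\lambda\neq 0$, the central element $X_{2m+1}$ lies in $\Img\ad(T)$ for every nonzero $T\in\g$ (checked in the same three cases you use), hence by the geodesic criterion every geodesic element is orthogonal to $X_{2m+1}$, so the geodesic elements cannot span $\g$. Your setup computation (the centre of $\H_{2m+1}$ is characteristic, $\Tr\varphi=(m+1)\lambda$, so $\g$ is unimodular if and only if $\lambda=0$) is also exactly the paper's.

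Part (b), however, contains a genuine gap, which you flag yourself: the construction of an orthonormal geodesic basis of $W=z^{\perp}$ for the projected bracket $\beta(u,w)=\pi_W[u,w]$ is left entirely open, and that construction is the whole content of the statement. The missing idea is that the projected algebra is far simpler than you anticipate. For $u,w\in\H_{2m+1}\cap z^{\perp}$ the Heisenberg bracket $[u,w]$ is a multiple of $z$, so $\beta(u,w)=0$; hence $\H_{2m+1}\cap z^{\perp}$ is an \emph{abelian} ideal of codimension one in $(W,\beta)$, and $(W,\beta)$ is canonically the quotient $\g/\Span(z)$ equipped with its induced inner product (your reduction to $W$ is precisely Proposition \ref{P1}, applicable because $z$ is central in $\g$ when $\lambda=0$). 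This quotient is unimodular, being the quotient of a unimodular algebra by a central ideal. Therefore Proposition \ref{P2} (\cite[Prop.~2]{CLNN}) applies verbatim: every inner product on a unimodular algebra with a codimension one abelian ideal has an orthonormal geodesic basis. Lifting that basis back via Proposition \ref{P1} finishes (b); this is exactly the paper's proof. In particular, the difficulty you expect --- an adapted diagonalisation or case analysis exploiting $\bar\varphi\in\mathfrak{sp}(2m)$ --- is illusory: once $z$ is factored out, the Heisenberg relations disappear completely and no trace of the symplectic structure remains. As written, your proposal proves (a) but not (b); to close (b) without citation you would have to reprove Proposition \ref{P2}, which is a nontrivial argument in its own right.
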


In every nonunimodular Lie algebra, there is a unique codimension one unimodular ideal called the \emph{unimodular kernel} (see \cite{Mi}); this is just the kernel of the map $\Tr \circ \ad$. So nonunimodular Lie algebras are of the form  $\h_\varphi$, where $\h$ is the unimodular kernel and the derivation $\varphi$ has nonzero trace.
In  \cite[Theorem~1]{CLNN}, it was shown that every unimodular Lie algebra of dimension 4 has an inner product with an orthonormal geodesic basis.

\begin{theorem}\label{T:dim4}
If $\g $ is a nonunimodular Lie algebra of dimension $\leq 4$, then there is an inner product on $\g$ with a geodesic basis unless either 
\begin{enumerate}[{\rm (a)}]
\item $\g\cong\mathcal{A}_n$ for  $n\in\{1,2,3\}$, or
\item  the unimodular kernel of $\g$ is isomorphic to the Heisenberg Lie algebra  $\H_3$, or
\item the unimodular kernel $\h$ of $\g$ is isomorphic to the Lie algebra of the group of isometries of the Euclidean plane, which has basis $\{X_1,X_2,X_3\}$ and relations $[X_1,X_2]=-X_3$, $[X_1,X_3]=X_2$.\end{enumerate}

\end{theorem}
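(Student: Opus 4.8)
The plan is to classify the nonunimodular Lie algebras $\g$ with $\dim\g\le 4$ according to their unimodular kernel $\h$, treating each resulting family either by invoking a theorem already proved or, in the one remaining case, by a direct argument. Writing $\g=\h_\varphi$ with $\Tr\varphi\neq 0$ as in the discussion preceding the statement, the kernel $\h$ is a unimodular Lie algebra of dimension $\dim\g-1\le 3$. Since every derivation of a semisimple Lie algebra is inner and every inner derivation of an algebra equal to its own derived algebra is trace-free, no semisimple algebra can occur as $\h$; so by the classification of unimodular Lie algebras of dimension $\le 3$ (see \cite{Mi}), $\h$ must be one of $\R$, $\R^2$, $\R^3$, $\H_3$, the algebra $\mathfrak{e}(1,1)$ with relations $[X_1,X_2]=X_2$, $[X_1,X_3]=-X_3$, or $\mathfrak{e}(2)$ as in~(c). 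Throughout I shall use that a nonzero $X\in\g$ is geodesic if and only if $\<[X,Z],X\>=0$ for all $Z\in\g$ (Section~\ref{S:prelim}).

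When $\h$ is abelian, $\g$ has a codimension one abelian ideal, so Theorem~\ref{th:oneabel} gives a geodesic basis unless $\g\cong\mathcal{A}_n$; this produces exactly the exceptions listed in~(a), and in dimension $2$ the unique algebra $\mathcal{A}_1$ is excluded by Lemma~\ref{Lemma1}. When $\h\cong\H_3$, the kernel is a codimension one ideal isomorphic to $\H_3$, so Theorem~\ref{th:oneheis}(a) shows that no inner product on the nonunimodular algebra $\g$ has a geodesic basis, which is~(b). For $\h\cong\mathfrak{e}(1,1)$ I would first compute $\operatorname{Der}(\mathfrak{e}(1,1))$ and check that every derivation preserves $\g'=\Span(X_2,X_3)$ and acts diagonally on it; consequently $\g'$ is abelian, $(\ad Y)|_{\g'}$ is semisimple with real eigenvalues for all $Y\in\g$, and $\dim\g'=2$ forces $\g\not\cong\mathcal{A}_3$. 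Theorem~\ref{th:rdiag} then supplies a geodesic basis.

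The main obstacle is case~(c), $\h\cong\mathfrak{e}(2)$, where I must show directly that \emph{no} inner product on $\g$ admits a geodesic basis. Here I would compute $\operatorname{Der}(\mathfrak{e}(2))$, finding that $\g'=\Span(X_2,X_3)$ is abelian and that, in this basis, $\ad(X_1)|_{\g'}=J:=\left(\begin{smallmatrix}0&1\\-1&0\end{smallmatrix}\right)$ and $\ad(X)|_{\g'}=eI+hJ$ with $e=\tfrac12\Tr\varphi\neq 0$. Fixing an arbitrary inner product and writing a general element as $Y=\alpha X+\beta X_1+V$ with $V\in\g'$, every bracket $[Y,Z]$ lies in $\g'$, and the part of $[Y,Z]$ depending on the $\g'$-component $W$ of $Z$ is $A\,W$, where $A=\alpha e\,I+(\alpha h+\beta)J$. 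The crucial observation is that $\det A=(\alpha e)^2+(\alpha h+\beta)^2$ is nonzero whenever $(\alpha,\beta)\neq(0,0)$, since $e\neq 0$.

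Using this, the geodesic condition $\<[Y,Z],Y\>=0$ splits into two cases. If $Y\notin\g'$, then restricting $Z$ to $\g'$ gives $\<A\,W,Y\>=0$ for all $W\in\g'$, and the invertibility of $A$ forces the orthogonal projection of $Y$ onto $\g'$ to vanish, i.e. $Y\perp\g'$. If instead $Y\in\g'$, then taking $Z=X$ and $Z=X_1$ yields $\<(eI+hJ)Y,Y\>=0$ and $\<JY,Y\>=0$; the second reduces the first to $e\,\<Y,Y\>=0$, whence $Y=0$. Thus every geodesic element lies in the two-dimensional subspace $\g'^{\perp}$, so $\g$ cannot have a geodesic basis. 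Since the families above exhaust all nonunimodular $\g$ with $\dim\g\le 4$, this completes the classification, the exceptions being precisely those in~(a),~(b),~(c). The delicate point is the $\mathfrak{e}(2)$ computation, where the non-real eigenvalues of $\ad(X_1)|_{\g'}$ are exactly what obstructs a geodesic basis, in contrast to the real-eigenvalue case $\mathfrak{e}(1,1)$ handled by Theorem~\ref{th:rdiag}.
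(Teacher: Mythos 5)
Your proof is correct, and it reaches the theorem by a genuinely different route than the paper's. The paper works from de Graaf's classification \cite{Graaf} of solvable Lie algebras of dimension $\le 4$: it lists the nonunimodular four-dimensional families $M^2$, $M^3_a$, $M^4$, $M^6_{a,b}$, $M^8$, $M^9_{-1}$, $M^{12}$, $M^{13}_a$, handles the first four by Theorem~\ref{th:oneabel}, $M^8$ by Theorem~\ref{th:rdiag}, $M^{12}$ and $M^{13}_a$ by Theorem~\ref{th:oneheis}(a), and disposes of $M^9_{-1}$ by a computation with its explicit structure constants, the key point being that $c^2+c+1$ has no real roots. You instead stratify by the isomorphism type of the unimodular kernel $\h$, which requires only the much easier classification of unimodular Lie algebras of dimension $\le 3$, after correctly excluding semisimple kernels on the grounds that all their derivations are inner and hence trace-free. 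The cases $\h$ abelian and $\h\cong\H_3$ are dispatched exactly as in the paper, by Theorems~\ref{th:oneabel} and~\ref{th:oneheis}(a); the cases $\h\cong\mathfrak{e}(1,1)$ and $\h\cong\mathfrak{e}(2)$ you handle uniformly for an arbitrary derivation $\varphi$ with $\Tr\varphi\ne0$, via short computations of $\operatorname{Der}(\mathfrak{e}(1,1))$ and $\operatorname{Der}(\mathfrak{e}(2))$, both of which you assert correctly (every derivation of $\mathfrak{e}(1,1)$ acts diagonally on $\Span(X_2,X_3)$; every derivation of $\mathfrak{e}(2)$ acts there as $eI+hJ$ with $e=\tfrac12\Tr\varphi$). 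In particular, your $\mathfrak{e}(2)$ argument---every geodesic element is forced into $\g'^{\perp}$ because $A=\alpha e\,I+(\alpha h+\beta)J$ has determinant $(\alpha e)^2+(\alpha h+\beta)^2>0$ whenever $(\alpha,\beta)\ne(0,0)$---rests on the same ``no real eigenvalues'' mechanism as the paper's observation that $c^2+c+1\ne 0$, but is carried out for every extension $\mathfrak{e}(2)_\varphi$ at once, so you never need to know that $M^9_{-1}$ is the unique such algebra up to isomorphism. What the paper's route buys is minimal structural argument, since everything reduces to a published list; what your route buys is self-containedness and generality: no four-dimensional classification is needed, and each kernel type is settled by a single uniform argument.
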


\begin{theorem}\label{T:dim5}
Every 5-dimensional unimodular Lie algebra $\g$ possesses an inner product with a geodesic basis. Moreover, $\g$ has  an inner product with an orthonormal geodesic basis if and only if 
\begin{enumerate}[{\rm (a)}]
\item $\g$ has a codimension one abelian ideal, or
\item $\g$ has a nontrivial centre, or
\item $\g$ has a  three-dimensional abelian nilradical $\n$, and a basis $\{X_1,\dots,X_5\}$ with $\n=\Span(X_3,X_4,X_5)$ so that relative to the basis $\{X_3,X_4,X_5\}$ for $\n$,
\[
\ad(X_1)|_{\n}=\begin{pmatrix} -2 & 0 & 0 \\ 0 & 1&0  \\ 0 & 0 & 1 \end{pmatrix},
\qquad\ad(X_2)|_{\n}=\begin{pmatrix} 0& 0 & 0 \\ 0 &  0 &1  \\ 0 & -1 & 0 \end{pmatrix}.\]
\end{enumerate}
\end{theorem}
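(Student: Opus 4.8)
The first thing I would do is remove the metric from the ``orthonormal'' part of the statement. Recalling that $X$ is geodesic exactly when $\langle[X,Y],X\rangle=0$ for all $Y$, an orthonormal basis $\{X_1,\dots,X_5\}$ is geodesic if and only if, for every $i$, the coefficient of $X_i$ in each bracket $[X_i,X_j]$ vanishes; equivalently, every operator $\ad(X_i)$ has zero diagonal relative to $\{X_1,\dots,X_5\}$. Since any prescribed basis can be declared orthonormal, $\g$ admits an inner product with an orthonormal geodesic basis if and only if $\g$ has a basis in which all of $\ad(X_1),\dots,\ad(X_5)$ have vanishing diagonal. I would use this purely algebraic reformulation throughout, as it decouples the problem from the choice of inner product.

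For the first assertion---that every $5$-dimensional unimodular $\g$ has some geodesic basis---the plan is to march through the classification of such algebras and dispose of each family with the results already in hand. Nilpotent algebras are handled by \cite{CLNN}; algebras with abelian nilradical by Theorem~\ref{th:nilabel}; algebras with abelian derived algebra and real, semisimple $\ad$-action by Theorem~\ref{th:rdiag}; and algebras with a codimension one abelian ideal by Theorem~\ref{th:oneabel}. This leaves only a short list: the non-solvable algebra $\sl(2,\R)\ltimes\R^2$, the semidirect products whose $\ad$-action has complex or non-semisimple eigenvalues, and the solvable algebras with a non-abelian nilradical. For these I would simply write down an explicit geodesic basis using the structure constants from the classification tables.

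For the characterization I would prove the two implications separately, working with the zero-diagonal reformulation. The ``if'' direction splits into the three cases. In (a), unimodularity forces $\ad(X_0)$ to be traceless on the codimension one abelian ideal, and a traceless real operator has zero diagonal in a suitable orthonormal basis (the numerical range of its symmetric part contains the origin); completing this basis with $X_0$ gives a zero-diagonal basis of $\g$, because the ideal is abelian and its bracket with $X_0$ stays inside the ideal. In (b), I would pass to the quotient $\g/\z$ by a central line, which is again unimodular and $4$-dimensional, invoke \cite[Theorem~1]{CLNN} to obtain a zero-diagonal basis there, and lift it together with the central generator; the central direction contributes only zeros to every diagonal, so the lift is again zero-diagonal. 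Case (c) is the single displayed algebra, which I would dispatch by exhibiting the zero-diagonal basis directly.

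The crux, and the step I expect to be the main obstacle, is the ``only if'' direction: for each $5$-dimensional unimodular algebra failing all of (a), (b), (c), I must show that no basis makes $\ad(X_1),\dots,\ad(X_5)$ simultaneously zero-diagonal. Unlike the constructions above, this is an obstruction argument and must be carried out case by case against the classification. For each offending algebra I would assume a zero-diagonal basis exists and derive a contradiction from an invariant of $\g$---for $\sl(2,\R)\ltimes\R^2$ from the interaction of its simple Levi part with the abelian radical, and for the solvable algebras from the structure of the nilradical together with the spectrum of the $\ad$-action on it. The most delicate family is the solvable one with a three-dimensional abelian nilradical $\n$: here one analyses the pair of commuting traceless operators $\ad(X_1)|_\n,\ad(X_2)|_\n$ and must show that they admit a common zero-diagonal orthonormal basis only for the eigenvalue pattern $(-2,1,1)$ paired with a rotation---precisely configuration (c)---while every other pattern (distinct real eigenvalues, or other complex spectra) is obstructed. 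Ensuring that this spectral case-analysis is exhaustive, and that the classification leaves no further exceptional algebra, is where the real work lies.
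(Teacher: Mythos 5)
Your zero-diagonal reformulation is correct (by antisymmetry of the bracket, the conditions $c_{ij}^i=0$ for all $i,j$ and $c_{ij}^j=0$ for all $i,j$ are the same set of conditions), and your overall architecture --- reduce via the classification, use Theorems \ref{th:nilabel}, \ref{th:rdiag}, \ref{th:oneabel} and the nilpotent results for existence, quotient by a central line for case (b), and a traceless-operator argument for case (a) --- is exactly the skeleton of the paper's proof. Your ``if'' direction is sound: (a) amounts to a proof of Proposition \ref{P2}, and (b) is Proposition \ref{P1} combined with Theorem \ref{T0}.

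The genuine gap is the ``only if'' direction, which you rightly call the crux but then only promise to carry out; this is where essentially all of the paper's work lies, and it does not reduce to citing ``an invariant.'' For $\sl(2,\R)\ltimes\R^2$ there is no quick obstruction from the Levi decomposition: the paper's Proposition \ref{p:noort} is a two-page elimination that normalizes basis vectors by the automorphisms $\phi_T$, uses the degenerate Killing form to show a putative orthonormal geodesic basis can contain no nonsingular element, and then rules out the all-singular configuration by explicit computation. For nilradical $\H_3\oplus\R$ the obstruction is a trace computation on the $\varphi$-invariant centre $V$ of $\n$ (one finds $\Tr \varphi|_{V}=-\la\ne 0$ while geodesy of an orthonormal pair spanning $V$ would force this trace to vanish); for $\m_0(4)$ one must first prove the trivial-centre unimodular algebra is unique, via the constraints a derivation places on the matrix of $\ad(X_0)|_{\n}$, before invoking \cite{CLNN}; and for $\g_{33}$ your framing --- that the pair of commuting traceless operators on $\n$ must admit a common zero-diagonal basis --- oversimplifies the problem, because the putative orthonormal geodesic basis of $\g$ need not be adapted to the splitting $\g'\oplus\g'^{\perp}$: the paper's argument must classify the geodesic unit vectors lying in neither subspace (the vectors $\pm Z_1,\pm Z_2,\pm Z_3$) and reach a contradiction by counting how many basis elements can lie in $\g'$, in $\g'^{\perp}$, and outside both. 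Without these four eliminations, together with the check that the nilradical case list ($\R^4$, $\H_3\oplus\R$, $\m_0(4)$, $\H_3$, $\R^3$) is exhaustive and that nilradical $\H_3$ is incompatible with trivial centre, the characterization remains unproved; what you have established is the routine half of the theorem.
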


\section{Preliminary remarks}\label{S:prelim}

Recall that for an inner product $\ip$ on a Lie algebra $\g$, a nonzero element $X\in \g$ is said to be a \emph{geodesic element},  and to be \emph{geodesic}, if
\begin{equation}\label{eq:def}
    \<X,[X,Y]\>=0
\end{equation}
for all $Y\in\g$. In other words, a nonzero element $X$ is geodesic if and only if $X$ is perpendicular to the image of the adjoint map $\ad(X):\g\to\g, Y\mapsto [X,Y]$. In particular, every nonzero vector from the centre $\z$ is geodesic. Similarly, every nonzero element of the orthogonal complement  $\g'^\perp$ of the derived algebra $\g'$, is geodesic.
For further information about geodesic elements, see
\cite{KNV,KS,Mc,CHGN1,CHGN2,CLNN,Ka,CKM}. 
 Note that geodesic elements are sometimes called \emph{homogeneous geodesics} in the literature.

We start with the following general fact concerning geodesic elements; see \cite[Proposition]{Sze}, \cite[Th\'{e}or\`{e}me~3]{Arn}, cf.~\cite{RS}. 
In fact, we will not require this result in this paper, but we include it for the reader's interest, as it provides good insight into the nature of geodesic elements.

\begin{lemma}\label{l:crit}
Let $\g$ be a Lie algebra and $G$ be its connected simply-connected Lie group. Suppose $\ip$ is an arbitrary inner product on $\g$. Then
\begin{enumerate}[{\rm (a)}]
  \item \label{it:crit1}
  a nonzero vector $Z \in \g$ is geodesic if and only if it is a critical point of the restriction of the squared norm function on $(\g, \ip)$ to the adjoint orbit of $Z$, 

  \item \label{it:crit2}
  if $\h$ is an ideal of $\g$ and $P$ is a Lie subgroup of $\mathrm{GL}(\h)$ containing $\Ad(G)_{|\h}$, then every closed nonzero orbit of the action of $P$ on $\h$ contains a geodesic element. 

\end{enumerate}
\end{lemma}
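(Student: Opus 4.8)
The plan is to treat the two parts separately, both resting on the same two ingredients: computing the tangent space to an orbit and differentiating the squared-norm function $N(Y)=\<Y,Y\>$, whose differential at a point $Z$ is $dN_Z(V)=2\<Z,V\>$.

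For part (a), I would first identify the tangent space to the adjoint orbit $\mathcal{O}_Z=\{\Ad(g)Z:g\in G\}$ at the point $Z$ itself. Differentiating the curve $t\mapsto \Ad(\exp(tX))Z$ at $t=0$ yields $\ad(X)Z=[X,Z]$, so that $T_Z\mathcal{O}_Z=\{[X,Z]:X\in\g\}$, i.e.\ the image of the map $X\mapsto[X,Z]$. Consequently $Z$ is a critical point of the restriction $N|_{\mathcal{O}_Z}$ precisely when $dN_Z$ annihilates $T_Z\mathcal{O}_Z$, that is, when $\<Z,[X,Z]\>=0$ for every $X\in\g$. Since $\<Z,[X,Z]\>=-\<Z,[Z,X]\>$, this is exactly condition \eqref{eq:def}, and part (a) follows.

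For part (b), let $O$ be a closed nonzero orbit of $P$ in $\h$. The analytic heart of the argument is that $N$, being coercive on $\h$ (its sublevel sets are bounded) and restricted to the closed set $O$, attains its infimum at some point $Z\in O$; because $0$ is a fixed point of the linear $P$-action and $O\neq\{0\}$, we have $0\notin O$, so $Z\neq 0$. As a minimizer, $Z$ is a critical point of $N|_O$, hence $\<Z,V\>=0$ for every $V\in T_ZO$. The tangent space $T_ZO$ equals $\p Z$, where $\p$ denotes the Lie algebra of $P$. Since $\Ad(G)_{|\h}\subseteq P$, the Lie algebra $\p$ contains $\ad(\g)_{|\h}=\{\ad(X)_{|\h}:X\in\g\}$, and therefore $T_ZO\supseteq\{[X,Z]:X\in\g\}$, where we note that $[X,Z]\in\h$ because $\h$ is an ideal. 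Criticality then forces $\<Z,[X,Z]\>=0$ for all $X\in\g$, which as in part (a) is condition \eqref{eq:def}; thus $Z$ is a geodesic element lying in $O$.

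The tangent-space computations and the differentiation of $N$ are routine. The step I expect to require the most care is the existence of a minimizer in part (b): one must combine the closedness of the orbit with the coercivity of $N$ to guarantee that the infimum is attained, and then check that the minimizer is nonzero. It is precisely here that both hypotheses on the orbit (closed and nonzero) are used, and where one should be mindful that a closed orbit is an embedded submanifold, so that the minimizer is genuinely a critical point of the smooth function $N|_O$.
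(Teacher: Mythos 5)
Your proof is correct and takes essentially the same approach as the paper: part (a) by differentiating the squared norm along the curves $t\mapsto\exp(t\ad(X))Z$ in the adjoint orbit, and part (b) by minimizing the norm over the closed orbit and observing the minimizer is nonzero and critical. The only cosmetic difference is in (b): the paper deduces geodesy by noting that $\Ad(G)Z$ is contained in the $P$-orbit and invoking (a), whereas you compute the tangent space $\p Z$ of the $P$-orbit and use $\ad(\g)_{|\h}\subseteq\p$; both routes are valid.
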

\begin{proof}
(a) A point $Z \ne 0$ is critical for the restriction of the squared norm function on $(\g, \ip)$ to the adjoint orbit of $Z$ if and only if for all $X \in \g$ we have
\begin{equation*}
0={\frac{d}{dt}}_{|t=0}\|\exp(t\ad(X))Z\|^2=2\<\ad(X)Z,Z\>,
\end{equation*}
which is equivalent to the fact that $Z$ is a geodesic element.

(b) Let $Y \in \h$ be a nonzero vector such that the orbit $P(Y) \subset \h$ is closed. Note that $0 \notin P(Y)$. Let $Z \in P(Y)$ be the closest point of $P(Y)$ from the origin. Then $Z$ is the closest point to the origin on the adjoint orbit $\Ad(G)Z$, hence is a geodesic element by assertion~\eqref{it:crit1}.
\end{proof}

\begin{remark}\label{rem:orbits}
In Lemma~\ref{l:crit}\eqref{it:crit2}, the subgroup $P$ may coincide with $\Ad(G)$ and the ideal $\h$ may coincide with $\g$. If $\g$ is a solvable Lie algebra with the nilradical $\n$, then the derived algebra $\g'$ is a subspace of $\n$, which implies that any nonzero vector orthogonal to $\n$ is geodesic. \end{remark}

We will require the following elementary proposition. We leave its proof to the reader; cf.~the proof of \cite[Prop.~1]{CLNN}.

\begin{proposition}\label{P1}
 If $\g $ has a nontrivial centre $\z$, then:
\begin{enumerate}[{\rm (a)}]
\item If $\g/\z$ possesses an inner product with a geodesic basis, then so too does $\g$.
\item If $\g/\z$ possesses an inner product with an orthonormal geodesic basis, then so too does $\g$. 
\end{enumerate}
\end{proposition}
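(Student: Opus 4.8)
The plan is to lift a geodesic basis on $\g/\z$ back up to $\g$ by choosing an orthogonal complement to the centre. Let $\pi\colon\g\to\g/\z$ be the quotient map and fix a linear section $s\colon\g/\z\to\g$, so that $\g=\m\oplus\z$ with $\m:=s(\g/\z)$, and $s=(\pi|_\m)^{-1}$ is the inverse of the isomorphism $\pi|_\m$. Starting from an inner product $\ip_0$ on $\g/\z$ with a geodesic basis $\{\bar Y_1,\dots,\bar Y_k\}$, I would define an inner product on $\g$ by declaring $\m\perp\z$, transporting $\ip_0$ to $\m$ via $s$ (so that $\<s(\bar X),s(\bar Y)\>:=\<\bar X,\bar Y\>_0$), and putting an arbitrary inner product on $\z$. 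The candidate geodesic basis for $\g$ is then $\{s(\bar Y_1),\dots,s(\bar Y_k)\}$ together with any basis $\{Z_1,\dots,Z_m\}$ of $\z$.

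Each $Z_j$ is geodesic because it is central, as recalled in Section~\ref{S:prelim}, so the real content is to check that each $s(\bar Y_i)$ is geodesic. The key structural observation is that the bracket on $\g$ factors through $\pi$: since central elements bracket trivially, for any $Y\in\g$ with $\bar W:=\pi(Y)$ we have $[s(\bar Y_i),Y]=[s(\bar Y_i),s(\bar W)]$, and applying $\pi$ gives $\pi([s(\bar Y_i),s(\bar W)])=[\bar Y_i,\bar W]$. Because $s$ is precisely the inverse of $\pi|_\m$, the $\m$-component of $[s(\bar Y_i),s(\bar W)]$ equals $s([\bar Y_i,\bar W])$, while its $\z$-component is killed by the orthogonality $\m\perp\z$. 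Hence
\[
\<s(\bar Y_i),[s(\bar Y_i),Y]\>=\<s(\bar Y_i),s([\bar Y_i,\bar W])\>=\<\bar Y_i,[\bar Y_i,\bar W]\>_0=0,
\]
the final equality being the hypothesis that $\bar Y_i$ is geodesic in $(\g/\z,\ip_0)$; this settles part~(a). For part~(b), the same construction applies verbatim: if $\{\bar Y_i\}$ is $\ip_0$-orthonormal then $\{s(\bar Y_i)\}$ is orthonormal in $\m$ by construction, and choosing $\{Z_j\}$ orthonormal in $\z$ with $\m\perp\z$ makes the whole basis orthonormal.

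The argument is essentially routine, and the only step requiring genuine care is the bookkeeping in the second paragraph—namely verifying that $s\circ\pi$ restricts to the identity on $\m$, so that the $\m$-component of the bracket is legitimately identified with $s$ applied to the quotient bracket, and that the transported inner product behaves as claimed under $s$. Once these identifications are in place, the geodesic condition passes transparently from $\g/\z$ to $\g$, which is why the authors can safely leave the proof to the reader.
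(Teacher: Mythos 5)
Your proof is correct and is precisely the construction the paper has in mind when it leaves this proposition to the reader (cf.\ the proof of \cite[Prop.~1]{CLNN}): choose a linear complement $\m$ to $\z$, declare it orthogonal to $\z$, transport the inner product from $\g/\z$ to $\m$ via the section, and use the fact that central elements are automatically geodesic. All the key verifications are in place — that the $\m$-component of a bracket is $s$ applied to the quotient bracket, that the $\z$-component is annihilated by orthogonality, and that orthonormality is preserved in part (b) — so nothing further is needed.
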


We also require the following three results on orthonormal geodesic bases from~\cite{CLNN}.

\begin{proposition}\label{Tnil}
{\rm(\cite[Prop.~1]{CLNN}\rm)} If  $\g $ is a nilpotent Lie algebra, then every inner product on $\g$  has an orthonormal geodesic basis.
\end{proposition}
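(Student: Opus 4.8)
The plan is to induct on $\dim\g$, using the centre as the engine of the induction. The base case $\dim\g=1$ is immediate, since a one-dimensional algebra is abelian and any unit vector forms a geodesic basis. For the inductive step, fix an arbitrary inner product $\ip$ on $\g$. Since $\g$ is nilpotent and nonzero, its centre $\z$ is nontrivial. If $\z=\g$ then $\g$ is abelian and every orthonormal basis is geodesic, because every central element $Z$ satisfies $[Z,Y]=0$ and hence $\<Z,[Z,Y]\>=0$ for all $Y$; so we may assume $0<\dim\z<\dim\g$.

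Decompose orthogonally $\g=\z\oplus\z^\perp$ and choose any orthonormal basis $\{Z_1,\dots,Z_m\}$ of $\z$, which consists of geodesic elements by the remark just made. It remains to fill out $\z^\perp$ with geodesic unit vectors. The projection $\pi\colon\g\to\g/\z$ restricts to a linear isomorphism $\z^\perp\to\g/\z$, and I would transport $\ip$ through this map to equip the nilpotent quotient $\g/\z$ with an inner product for which $\pi|_{\z^\perp}$ is an isometry. Since $\dim(\g/\z)<\dim\g$, the induction hypothesis supplies an orthonormal geodesic basis of $\g/\z$; lifting it back through $\pi|_{\z^\perp}$ yields an orthonormal family $\{X_1,\dots,X_k\}\subset\z^\perp$, where $k=\dim(\g/\z)$.

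The crux is then to verify that each lifted vector $X_i$ is geodesic in $\g$, and not merely in the quotient; this is precisely the mechanism underlying Proposition~\ref{P1}(b). For any $Y\in\g$, decompose $[X_i,Y]$ along $\z\oplus\z^\perp$. Because $X_i\in\z^\perp$, the $\z$-component contributes nothing to the pairing, so $\<X_i,[X_i,Y]\>=\<X_i,\operatorname{pr}_{\z^\perp}[X_i,Y]\>$. Under the isometry $\pi|_{\z^\perp}$ this equals the quotient pairing $\<\pi(X_i),[\pi(X_i),\pi(Y)]\>_{\g/\z}$, which vanishes because $\pi(X_i)$ is geodesic in $\g/\z$. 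Hence $X_i$ is geodesic in $\g$, and $\{Z_1,\dots,Z_m,X_1,\dots,X_k\}$ is the desired orthonormal geodesic basis.

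I expect the compatibility step of the last paragraph to be the only real obstacle: the argument works exactly because central vectors are orthogonal to $\z^\perp$ and because the bracket descends to $\g/\z$, so that the geodesic condition is preserved under the isometric identification $\z^\perp\cong\g/\z$. Everything else is bookkeeping, and the strength of the statement — that \emph{every} inner product admits such a basis — comes for free from inducting on the full class of nilpotent algebras rather than on a single fixed metric.
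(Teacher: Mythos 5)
Your proof is correct and is essentially the intended argument: the paper does not prove Proposition~\ref{Tnil} itself but cites \cite[Prop.~1]{CLNN}, and the mechanism the paper isolates for exactly this kind of step, Proposition~\ref{P1}(b), is precisely your centre-quotient induction (nontrivial centre of a nilpotent algebra, orthogonal splitting $\g=\z\oplus\z^\perp$, lift of a geodesic basis from $\g/\z$). Your transport of the given inner product to $\g/\z$ through the isometry $\z^\perp\cong\g/\z$ is the step that upgrades the conclusion from ``some inner product'' to ``every inner product,'' and your verification that lifted vectors remain geodesic is exactly right.
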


\begin{proposition}\label{P2}
{\rm(\cite[Prop.~2]{CLNN}\rm)}
 If $\g $ is unimodular and has a codimension one abelian ideal, then every inner product on $\g$  has an orthonormal geodesic basis.
\end{proposition}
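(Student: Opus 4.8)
The plan is to split $\g$ orthogonally along the codimension one abelian ideal and reduce the whole problem to a single statement about symmetric operators. Write $\ag$ for the abelian ideal and note that since $[\ag,\ag]=0$ and $[\g,\ag]\subseteq\ag$, the derived algebra satisfies $\g'\subseteq\ag$. Given an arbitrary inner product $\ip$, the orthogonal complement $\ag^\perp$ is one-dimensional; let $e$ be a unit vector spanning it. Because $e\perp\ag\supseteq\g'$, the vector $e$ lies in $\g'^{\perp}$ and is therefore geodesic. It remains to produce an orthonormal basis of $\ag$ consisting of geodesic elements; together with $e$ this gives the desired orthonormal geodesic basis of $\g=\R e\oplus\ag$.

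Next I would pin down the geodesic condition inside $\ag$. Set $B=\ad(e)|_{\ag}\colon\ag\to\ag$. For $Z\in\ag$ one has $[Z,W]=0$ for all $W\in\ag$ and $[Z,e]=-BZ$, so the image of $\ad(Z)$ is spanned by $BZ\in\ag$; hence $Z$ is geodesic in $\g$ precisely when $\<Z,BZ\>=0$. Since $\<Z,BZ\>=\<Z,SZ\>$ for the symmetric part $S=\tfrac12(B+B^{T})$ of $B$ (the skew part contributing nothing), and since unimodularity of $\g$ forces $\Tr B=\Tr(\ad(e))=0$ and hence $\Tr S=0$, the task becomes purely linear-algebraic: given a symmetric, trace-free operator $S$ on the Euclidean space $(\ag,\ip)$, find an orthonormal basis $\{Z_1,\dots,Z_{n-1}\}$ with $\<Z_i,SZ_i\>=0$ for every $i$.

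The heart of the argument, and the only nonroutine step, is this last fact: a trace-free symmetric operator is orthogonally equivalent to one with zero diagonal. I would prove it by induction on $\dim\ag$, the case $\dim\ag=1$ being trivial since then $S=0$. For $\dim\ag\geq2$, the function $Z\mapsto\<Z,SZ\>$ on the unit sphere attains the extreme eigenvalues $\lambda_{\min}\le 0\le\lambda_{\max}$ of $S$ (the inequalities following from $\Tr S=0$), so by the intermediate value theorem and connectedness of the sphere there is a unit vector $Z_1$ with $\<Z_1,SZ_1\>=0$. Let $W=Z_1^{\perp}$ and let $S'$ be the compression of $S$ to $W$; then $\Tr S'=\Tr S-\<Z_1,SZ_1\>=0$, so the inductive hypothesis yields an orthonormal basis $Z_2,\dots,Z_{n-1}$ of $W$ with $\<Z_i,S'Z_i\>=0$, and for $Z\in W$ one checks $\<Z,S'Z\>=\<Z,SZ\>$. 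Appending $e$ then completes the proof. I expect no serious obstacle beyond carefully verifying this compression/induction step; everything else is bookkeeping on the splitting $\g=\R e\oplus\ag$ and the elementary computation of $\ad(Z)$ on $\ag$.
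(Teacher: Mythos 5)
Your proof is correct and complete. Note that the paper itself gives no proof of this proposition---it is imported verbatim from \cite[Prop.~2]{CLNN}---but your argument is precisely the standard one behind that result: split $\g=\R e\oplus\ag$ orthogonally, observe $e\in\g'^\perp$ is geodesic and that $Z\in\ag$ is geodesic iff $\<Z,\ad(e)Z\>=0$, use unimodularity to get $\Tr\,(\ad(e)|_{\ag})=0$, and then invoke the classical linear-algebra fact (which you prove correctly by the sphere-connectedness/compression induction) that a trace-free symmetric operator admits an orthonormal basis on which its quadratic form vanishes.
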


\begin{theorem}\label{T0}
{\rm\cite[Theorem~1]{CLNN}\rm)} If  $\g $ is a unimodular Lie algebra of dimension $4$, then every inner product on $\g$  has an orthonormal geodesic basis.
\end{theorem}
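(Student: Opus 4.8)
The plan is to run a case analysis over the isomorphism classes of $4$-dimensional unimodular Lie algebras, routing almost all of them to the results already established and isolating the one genuinely new case. First I would record the two bookkeeping facts from Section~\ref{S:prelim}: a nonzero $X$ is geodesic precisely when $X\perp\im(\ad X)$, so in particular every central vector and every vector in $\g'^\perp$ is geodesic. The organising invariant is the dimension of the nilradical $\n$. If $\g$ is nonsolvable, its Levi factor is a $3$-dimensional simple algebra $\mathfrak s\in\{\sl(2,\R),\so(3)\}$ and the $1$-dimensional radical carries a trivial $\mathfrak s$-action, so $\g\cong\mathfrak s\oplus\R$. If $\g$ is solvable then $\dim\n\in\{3,4\}$: the value $2$ is excluded, since two commuting trace-free $2\times2$ matrices are simultaneously triangularisable over $\mathbb C$ with opposite eigenvalues and hence always share a nonzero nilpotent direction, which would enlarge $\n$. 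This yields exactly four cases: (i) $\dim\n=4$, i.e. $\g$ nilpotent; (ii) $\dim\n=3$ with $\n\cong\R^3$; (iii) $\dim\n=3$ with $\n\cong\H_3$; and (iv) $\g\cong\mathfrak s\oplus\R$.

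I would dispatch the first three cases directly. Case (i) is immediate from Proposition~\ref{Tnil}. In case (ii) the nilradical $\R^3$ is a codimension-one abelian ideal, so Proposition~\ref{P2} applies; note that this absorbs the decomposable algebras such as $\mathfrak e(2)\oplus\R$ and $\mathfrak e(1,1)\oplus\R$, whose nilradical is $\R^3$. In case (iii) the nilradical is a codimension-one ideal isomorphic to $\H_3$, so Theorem~\ref{th:oneheis}(b) applies. Each of these supplies an orthonormal geodesic basis for \emph{every} inner product.

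The remaining and principal case is $\g\cong\mathfrak s\oplus\R Z$ with $Z$ spanning the centre. Here I would reduce to three dimensions. Since $\mathfrak s$ is perfect, $\g'=\mathfrak s$ is a $3$-dimensional subspace; let $W$ be a unit vector spanning the line $\g'^\perp$, so $W$ is geodesic and $W^\perp=\mathfrak s$. For $X\in\mathfrak s$, using that $Z$ is central and $[\mathfrak s,\mathfrak s]\subseteq\mathfrak s$, one checks that $X$ is geodesic in $\g$ if and only if it is geodesic in $(\mathfrak s,\ip|_{\mathfrak s})$. It therefore suffices to produce an orthonormal geodesic basis of the $3$-dimensional unimodular algebra $(\mathfrak s,\ip|_{\mathfrak s})$ and adjoin $W$. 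For this I would use the Milnor frame: writing the bracket as $[X,Y]=L(X\times Y)$ for the cross product associated with $\ip|_{\mathfrak s}$ and a fixed orientation, unimodularity forces $L$ to be self-adjoint, and an orthonormal eigenbasis $e_1,e_2,e_3$ of $L$ satisfies $[e_i,e_j]=\lambda_k e_k$ cyclically; then each $e_i$ is orthogonal to $\im(\ad e_i)\subseteq\Span(e_j,e_k)$, hence geodesic. Consequently $\{e_1,e_2,e_3,W\}$ is orthonormal (as $\mathfrak s\perp W$) and geodesic, as required.

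The main obstacle is precisely this last case: the codimension-one-ideal theorems do not reach $\mathfrak s\oplus\R$, so the simple part must be handled by hand. The reduction to $(\mathfrak s,\ip|_{\mathfrak s})$ is routine once set up, so the real content lies in the existence of an adapted (Milnor) orthonormal frame for an \emph{arbitrary} inner product on a $3$-dimensional unimodular algebra, together with the observation that any such frame is automatically geodesic. A secondary technical point, which I would verify carefully, is the exhaustiveness of the classification, and in particular the exclusion of a $2$-dimensional nilradical.
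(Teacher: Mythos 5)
Your proof is correct, but it takes a genuinely different route from the paper for the simple reason that the paper contains no proof of Theorem~\ref{T0} at all: the result is imported verbatim from \cite[Theorem~1]{CLNN} and used as a black box in the dimension-4 and dimension-5 arguments. What you have done is reconstruct the cited theorem from the toolkit the present paper quotes or proves independently: Proposition~\ref{Tnil} for $\dim\n=4$, Proposition~\ref{P2} for $\n\cong\R^3$, Theorem~\ref{th:oneheis}(b) for $\n\cong\H_3$ (whose proof in the paper rests only on Propositions~\ref{P1} and~\ref{P2}, so no circularity is introduced), and a Milnor-frame argument for the reductive case $\mathfrak{s}\oplus\R$. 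That last step is sound: for a $3$-dimensional algebra, unimodularity is equivalent to self-adjointness of $L$ in $[X,Y]=L(X\times Y)$, an orthonormal eigenbasis then satisfies $[e_i,e_j]=\lambda_k e_k$ cyclically, so $e_i\perp\im\ad(e_i)$; and your reduction (for $X\in\mathfrak{s}$, geodesic in $\g$ iff geodesic in $(\mathfrak{s},\ip|_{\mathfrak{s}})$, since $Z$ is central and $[\g,\g]=\mathfrak{s}$), together with adjoining the unit vector $W$ spanning $\g'^\perp$, is valid and delivers the full ``every inner product'' strength. What your route buys is a self-contained proof showing the theorem is nearly a corollary of results already in this paper; what the citation buys the authors is brevity.

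One step you should tighten is the exclusion of $\dim\n=2$, where your one-line justification is not yet a proof. The correct statement is that some \emph{nontrivial real} combination $N=aA_1+bA_2$ of the commuting traceless operators $A_i=\ad(Y_i)|_\n$ is nilpotent (possibly zero); then, since $[\g,\g]\subseteq\n$, the subspace $\Span(\n,\,aY_1+bY_2)$ is an ideal, and $N^2=0$ makes it a nilpotent ideal properly containing $\n$, a contradiction. Simultaneous triangularisation over $\mathbb{C}$ alone does not immediately produce real $a,b$: if $A_1$ had eigenvalues $\pm\lambda$ with $\lambda$ real nonzero while $A_2$ had eigenvalues $\pm i\beta$ with $\beta\ne 0$, no nontrivial real combination kills the diagonal. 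You must rule this mixed case out separately, which is easy: a traceless $2\times 2$ matrix with distinct real eigenvalues has centraliser diagonal in its eigenbasis (hence with real spectrum), and if some $A_i$ has nonreal eigenvalues, its traceless centraliser is $\R A_i$, so a combination vanishes identically --- which is again nilpotent. Also note that ``share a nonzero nilpotent direction'' is not quite the right formulation; it is a nilpotent \emph{combination}, not a common eigenvector, that enlarges the nilradical. With that patch your case list is exhaustive and the argument is complete.
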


\section{Abelian nilradical: Unimodular case}
\label{s:abelian}

Let $\g$ be a unimodular solvable Lie algebra with an abelian nilradical $\n$. Denote $n=\dim \n, \; m=\dim \g- n$. Choose an arbitrary basis $\{Y_j : \; j=1, \dots, m\}$, whose span complements $\n$ in $\g$ and denote $A_j, \; j= 1, \dots, m$, the restrictions of $\ad(Y_j)$ to $\n$. The operators $A_j$ pairwise commute and have zero trace.

For any choice of an inner product $\ip$ on $\g$, we can assume without loss of generality that the $Y_j$'s span the orthogonal complement to $\n$. Moreover, any nonzero element of $\n^\perp$ is a geodesic element. It follows that for a basis of geodesic elements for $\g$ to exist it suffices that such a basis exists for $\n$ (and the same is true for orthonormal bases). A nonzero vector $X \in \n$ is geodesic if and only if
\begin{equation}\label{eq:geodn}
    \<A_jX,X\>=0, \quad\text{for all } j=1, \dots, m.
\end{equation}

\begin{proof}[Proof of Theorem \ref{th:nilabel}]
In the above notation we have $m$ pairwise commuting operators $A_j$ on $\n$, with the zero trace. By \cite[Theorem~1]{Z} (see also \cite[Proposition~1.1]{AMS}), there exist a direct decomposition $\n=\oplus_{\a=1}^p \n_\a$, with the subspaces $\n_\a$ being common invariant subspaces of all the $A_j$, and bases $B^\a$ for each of the subspaces $\n_\a$ relative to which the restriction of every operator $A_j$ to every $\n_\a$ has one of the following forms:
\begin{equation*}
    \left(
      \begin{array}{ccc}
        c_j^\a &  & * \\
         & \ddots &  \\
        0 &  & c_j^\a \\
      \end{array}
    \right)
    \quad \text{or}\quad
    \left(
      \begin{array}{ccccc}
        a_j^\a & -b_j^\a &  &  &  \\
        b_j^\a & a_j^\a  &  & * &  \\
         &  & \ddots &  &  \\
         & 0 &  & a_j^\a  & -b_j^\a  \\
         &  &  & b_j^\a & a_j^\a  \\
      \end{array}
    \right)
\end{equation*}
(the matrix on the left is upper-triangular, with all the diagonal elements the same; the matrix on the right is upper block-triangular, with all the $2 \times 2$ diagonal blocks the same).

Choose the inner product on $\n$ in such a way that the subspaces $\n_\a$ are mutually orthogonal and that the elements of each basis $B^\a$ are orthonormal. Then for $e_\a \in B^\a, \, e_\b \in B^\b$, $\a \ne \b$, we have $\<A_je_\a, e_\b\>=0$, for all $j=1, \dots, m$, and moreover, $\<A_j e_\a, e_\a\> =n_\a^{-1} \Tr ((A_j)_{|\n_\a})$, where $n_\a=\dim \n_\a$. Now for every $\a$, take an arbitrary vector $e_\a^{i_\a} \in B^\a$, $i_\a=1, \dots, n_\a$, and an arbitrary number $\varepsilon_\a=\pm 1$ and define $Z:=\sum_\a \varepsilon_\a \sqrt{n_\a} e_\a^{i_\a}$. We have $\<A_j Z, Z\>=\sum_\a n_\a\<A_j e_\a^{i_\a}, e_\a^{i_\a}\>=\sum_\a n_\a(n_\a^{-1} \Tr ((A_j)_{|\n_\a}))=\Tr A_j=0$, by unimodularity, so every such vector $Z$ is geodesic. Note that if two such vectors $Z$ have all but one number $\varepsilon_\a$ the same, then their difference is a nonzero multiple of a basis vector. Hence such $Z$'s span the whole nilradical $\n$, as required.
\end{proof}

\section{Abelian derived algebra: $\mathbb{R}$-diagonal case}
\label{s:rdiag}

\begin{proof}[Proof of Theorem \ref{th:rdiag}]
Suppose $\g$ is a Lie algebra with an  abelian derived algebra $\g'$.
By Proposition \ref{P1}, we may assume that $\g$ has trivial centre. Let $\t$ be an arbitrary linear complement to $\g'$ in $\g$. As $\g$ is non-abelian, both $\t$ and $\g'$ are nontrivial. By hypothesis, the action of $\ad(\t)$ on $\g'$ is completely reducible over $\R$, so we have a root subspace decomposition $\g'= \n_1 \oplus \dots \oplus \n_p$, where $p \ge 1$ and the dimensions $d_\a=\dim \n_\a$ are all nonzero. For every root subspace $\n_\a$,  the corresponding root is the linear functional $\la_\a \in \t^*$ defined by $[Y,X]=\la_\a(Y)X$ for all $Y \in \t$ and any $X \in \n_\a$, where $\la_\a \ne \la_\b$ when $\a \ne \b$. Note that neither the root subspaces $\n_\a$, nor the roots depend on the particular choice of $\t$.

We first choose $\t$ in such a way that $[\t,\t]$ has the simplest form possible. To do this, for $\a=1, \dots, p$, choose a basis $\{X_{\a i}:  \; i=1, \dots, d_\a\}$ for $ \n_\a$, and define the two-forms $\omega_{\a i} \in \Lambda^2 \t^*$ as follows: for $Y_1, Y_2 \in \t$, set
\[
[Y_1, Y_2]=\sum_{\a=1}^p\sum_{i=1}^{d_\a} \omega_{\a i}(Y_1, Y_2)X_{\a i}.
\]
 Then the Jacobi identity on three elements of $\t$ implies that $\omega_{\a i} \wedge \lambda_\a=0$.
 So by Cartan's Lemma,  there exists a one-form $\psi_{\a i} \in \t^*$ such that $\omega_{\a i}= \psi_{\a i} \wedge \lambda_\a$. Now take an arbitrary basis $\{Y_a\}$ for $\t$ and define $\bar{Y}_a=Y_a-\sum_{\a=1}^p\sum_{i=1}^{d_\a} \psi_{\a i}(Y_a)X_{\a i}$. Then 
 \[
 [\bar{Y}_a, \bar{Y}_b]=\sum_{\a=1}^p\sum_{i=1}^{d_\a} (\omega_{\a i}(Y_a, Y_b)-\la_\a(Y_a)\psi_{\a i}(Y_b)+\la_\a(Y_b)\psi_{\a i}(Y_a)) X_{\a i} =0.
 \]
Now choosing $\bar{\t}$ to be the span of the $\bar{Y}_a$ and dropping all the bars, we obtain $[\t,\t] =0$.
  
  We will now construct the inner product $\ip$ on $\g$ having a basis of geodesic elements. Let $m = \dim \t$. As we will see, except for the case when $m =1$, the inner products on $\g'$ and on $\t$ can be chosen arbitrarily; it is the choice of $\g'^\perp$ (the ``inclination" of $\t$ to $\g'$) that really matters, and even it can be chosen from an open, dense set of linear complements to $\g'$ in $\g$. So when $m \not=1$, there is an abundance of inner products having a basis of geodesic elements. Note that, however, not every inner product has this property. For example, choosing $\t \perp \g'$ and the basis $\{X_{\a i}\}$ orthonormal, and assuming that for some $Y \in \t$, the restriction of $\ad(Y)$ to $\g'$ is positive definite, we obtain an inner product whose set of geodesic elements is $\t$. 

Suppose that $m > 1$. In the above notation, choose vectors $X'_a \in \g', \; a=1, \dots, m$, which will be specified later and define the inner product on $\g$ in such a way that $\g'^\perp=\Span_{a=1}^m(Y_a+X'_a)$ and that the subspaces $\n_\a$ are mutually orthogonal (this latter requirement is needed more for the sake of technical convenience).

For every $\a $, let $Y = \sum_a \mu_aY_a$ be a nonzero vector in $\Ker \la_\a$. Consider a vector $Z=\sum_a \mu_a(Y_a+X'_a) + \hat{X}$, where $\hat{X} \in \g'$ (note that the first summand of $Z$ lies in $\g'^\perp$). Such a vector is geodesic if and only if
\begin{equation}\label{eq:ZZT}
\<\hat{X},[Z,T]\>=0, \quad \text{for all } T \in \g.
\end{equation}
Now choosing $T=X_{\b i}$ in \eqref{eq:ZZT} we obtain $[Z,T]=[\sum_a \mu_a Y_a, X_{\b i}]=[Y, X_{\b i}]=\la_\b(Y)X_{\b i}$. It follows that $[Z,\n_\b] \subset \n_\b$, for $\b \ne \a$ and that $[Z,\n_\a]=0$, since $Y \in \Ker \la_\a$. Therefore $\ad(Z) \g' \subset \oplus_{\b \ne \a} \n_\b$, so for $T \in \g'$, condition \eqref{eq:ZZT} will be satisfied if $\hat X \perp \oplus_{\b \ne \a} \n_\b$, that is, when $\hat X \in \n_\a$. Next, choose $T \in \t$  in \eqref{eq:ZZT}. Then $\<\hat X,[Z,T]\>=\<\hat X,[\sum_a \mu_a X'_a + \hat{X}, T]\> + \<\hat X,[\sum_a \mu_a Y_a,T]\>$. But the latter term vanishes, since $[\t,\t] =0$, and the former term equals $\la_\a(T)\<\hat X, \pi_\a(\sum_a \mu_a X'_a) + \hat{X}\>$, where $\pi_\a: \g' \to \n_\a$ is the orthogonal projection. Thus the vector $Z$ is geodesic, for all choices of $\hat X \in \n_\a$ which satisfy the equation 
\[
0=\<\hat X, \pi_\a(\sum_a \mu_a X'_a) + \hat{X}\>=\big\|\hat X + \frac12 \pi_\a(\sum_a \mu_a X'_a)\big\|^2-\big\|\frac12 \pi_\a(\sum_a \mu_a X'_a)\big\|^2.
\]
 This equation for $\hat{X}$ defines a hypersphere in $\n_\a$, provided $\pi_\a(\sum_a \mu_a X'_a) \ne 0$. As any vector from $\g'^\perp$ is geodesic (in particular, $\sum_a \mu_a(Y_a+X'_a)$, the $\g'^\perp$-component of $Z$), we obtain that the span of the set of geodesic elements contains $\n_\a$, provided $\sum_a \mu_a \pi_\a(X'_a) \ne 0$. As not all of the $\mu_a$ are zeros (since $Y=\sum_a \mu_aY_a \ne 0$), we can choose (almost arbitrarily) the $\n_\a$ components $\pi_\a(X'_a)$ of the vectors $X'_a$ in such a way that this condition is satisfied. Repeating this procedure for every $\a$, we obtain that the span of the set of geodesic elements contains all the subspaces $\n_\a$, so it is the entire algebra $\g$, as required.

It remains to deal with the situation where $m = 1$, in which case $\g'$ is a codimension one abelian ideal. Here the required result follows from Theorem \ref{th:oneabel}, which we will prove in the next section.\end{proof}

\section{Codimension one ideals}\label{s:heis}

The fact that $\mathcal{A}_n=\R^{n}_{\id}$ does not have a geodesic basis has a stronger formulation which will be useful later in the paper.

\begin{lemma}\label{Lemma1}
For every inner product  on the Lie algebra $\mathcal{A}_n$,
every geodesic element is orthogonal to the derived algebra $\R^{n}$, so up to scaling, there is only geodesic element.
\end{lemma}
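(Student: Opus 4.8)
The plan is to exploit the very simple bracket structure of $\mathcal{A}_n$. Recall that $\mathcal{A}_n=\R^n_{\id}$ has abelian derived algebra $\g'=\R^n$ and a distinguished element $X$ satisfying $[X,Y]=\id(Y)=Y$ for all $Y\in\R^n$, while $[Y_1,Y_2]=0$ for $Y_1,Y_2\in\R^n$. I would fix an arbitrary inner product $\ip$ on $\mathcal{A}_n$, making no assumption that $X$ is orthogonal to $\R^n$, and work directly from the geodesic criterion \eqref{eq:def}.

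First I would record the bracket of two general elements. Writing $Z=aX+V$ and $T=bX+W$ with $a,b\in\R$ and $V,W\in\R^n$, bilinearity together with the relations above yields $[Z,T]=aW-bV$, a formula that is entirely independent of the inner product. Substituting into the geodesic condition, $Z$ is geodesic precisely when $\<Z,aW-bV\>=0$ for all $b\in\R$ and all $W\in\R^n$. Specialising, the choice $b=0$ with $W$ ranging over $\R^n$ gives $a\<Z,W\>=0$ for all $W\in\R^n$, while the choice $W=0$, $b=1$ gives $\<Z,V\>=0$.

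The final step is a case analysis on the coefficient $a$ of $X$ in $Z$. If $a\ne 0$, the first relation forces $\<Z,W\>=0$ for every $W\in\R^n$, so $Z$ is orthogonal to the derived algebra, which is the assertion. If $a=0$, then $Z=V$ and the condition collapses to $-b\|V\|^2=0$ for all $b$, forcing $V=0$; since a geodesic element is nonzero by definition, this case is vacuous. Hence every geodesic element has $a\ne 0$ and lies in the orthogonal complement $(\R^n)^\perp$, which is one-dimensional because $\R^n$ has codimension one; this gives uniqueness up to scaling. (Conversely, every nonzero vector of $(\R^n)^\perp$ is geodesic, as noted in Section~\ref{S:prelim}, so such an element indeed exists.)

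I expect no genuine obstacle here: the argument is a short computation. The only point needing care is that the arbitrary inner product does \emph{not} make $X$ orthogonal to $\R^n$, so $Z$ and $T$ must be taken as fully general elements; happily, the bracket formula $[Z,T]=aW-bV$ does not involve the metric, and the dichotomy on $a$ then settles the statement cleanly.
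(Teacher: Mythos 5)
Your proof is correct and follows essentially the same route as the paper's: both plug an element into the geodesic condition $\<Z,[Z,T]\>=0$ and extract a squared-norm term that forces the $\R^n$-component to vanish. The only difference is cosmetic --- the paper first normalizes the complement of $\g'$ to be orthogonal to $\R^n$ and tests elements of the form $X+Y$, whereas your case analysis on the coefficient $a$ works with a general decomposition and also disposes explicitly of candidate geodesic elements lying inside $\R^n$, a case the paper leaves implicit.
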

\begin{proof}
Choose an inner product $\ip$ on $\mathcal{A}_n$. Let $X$ be a nonzero element  orthogonal to $\mathcal{A}_n'$. Clearly $X$ is a geodesic element. Suppose that $X+Y$ is a geodesic element for some element $Y\in\mathcal{A}_n'$. We have
\[
0=\<[X,X+Y],X+Y\>=\<Y,X+Y\>=\<Y,Y\>,
\]
and so $Y=0$ as claimed.
\end{proof}

Given the above lemma, the proof of Theorem \ref{th:oneabel} can be completed 
directly by considering a Lie algebra of the form $\R^n_\varphi$, where $\varphi$ isn't the identity map, and using the rational canonical form of $\varphi$ to explicitly construct the required inner product. The following proof is more succinct.

\begin{proof}[Proof of Theorem \ref{th:oneabel}]
Suppose that $\g=\Span(Y) \oplus \h$ as a linear space where $\h$ is an abelian ideal, and denote $A=(\ad(Y))_{|\h} \in \End(\h)$. If $A=0$, then $\g$ is abelian and any nonzero vector of $\g$ is geodesic relative to any inner product. Otherwise, suppose that $A$ is not a multiple of the identity. Fix a background inner product $\ip_0$ on $\h$. Then for any positive definite, symmetric (relative to $\ip_0$) operator $G$ we can define an inner product $\ip$ on $\g$ by requiring that $\<Y, \h\>=0$ and that $\<X_1,X_2\>=\<GX_1,X_2\>_0$, for $X_1,X_2 \in \h$. For any choice of $G$, the vector $Y$ is geodesic for $(\g,\ip)$. Moreover, a nonzero vector $X \in \h$ is geodesic if and only if it is a zero of the quadratic form $\phi(X)=\<GAX,X\>_0$.

Now if $G$ is chosen in such a way that the form $\phi$ is indefinite, then its zero set spans $\h$ (and so the geodesic elements of $(\g,\ip)$ span the entire algebra $\g$). Indeed, suppose all the zeros of $\phi$ lie in a hyperplane of $\h$ defined by the equation $x_1=0$ relative to some basis. Then $\phi(X)>0$ when $x_1 > 0$ and $\phi(X) < 0$ when $x_1 < 0$ (or vice versa), so by continuity, $\phi(X)=0$ when $x_1=0$, so $\phi$ is a product of $x_1$ by a linear form, which must again be a multiple of $x_1$, as all the zeros of $\phi$ lie in the hyperplane $x_1=0$. So $\phi = c x_1^2$, a contradiction.

Therefore it is sufficient to find a symmetric positive definite $G$ such that $\phi$ is indefinite. Seeking a contradiction suppose that for any $G$ the form $\phi$ is semidefinite. If for some $G$ the form $\phi$ vanishes, then we are done. Otherwise, changing $A$ to $-A$ if necessary, we can assume that for some particular $G$, the form $\phi$ is positive semidefinite and nonzero. By continuity and from the fact that the set of positive definite, symmetric operators $G$ is connected, it follows that this is true for all $G$, that is, $\<GAX,X\>_0 \ge 0$, for all $X \in \h$ and all positive definite, symmetric operators $G$. As $A$ is not a multiple of the identity, there exists $X' \in \h$ such that $\rk(X',AX')=2$. Define $T=\|AX'\|_0 X'-\|X'\|_0 AX'$ and then define $G$ by $GX=\<T,X\>_0 T+\ve X$ for $X \in \h$, where $\ve > 0$ is small enough. Such a $G$ is symmetric and positive definite and moreover, $\<GAX',X'\>_0=\<T,AX'\>_0\<T,X'\>_0 + \ve \<AX',X'\>_0=-\|AX'\|_0 \|X'\|_0 (\<X', AX'\> -\|X'\|_0 \|AX'\|_0)^2 + \ve \<AX',X'\> < 0$ for small enough $\ve$, as $AX'$ and $X'$ are non-collinear. This contradiction proves the proposition.
\end{proof}

We now consider a Lie algebra $\g$ having a codimension one ideal isomorphic to the Heisenberg Lie algebra $\H_{2m+1}$. 
Recall that $\H_{2m+1}$ has basis $\{X_1,\dots,X_{2m+1}\}$ and relations $[X_i,X_{i+m}]=X_{2m+1}$, for $ i=1, \dots, m$.

\begin{proof}[Proof of Theorem \ref{th:oneheis}]
Let $\ip$ be an inner product on $\g$. Let $\m$ be the orthogonal complement to $X_{2m+1}$ in $\H_{2m+1}$ and let $Y$ be a unit vector orthogonal to $\H_{2m+1}$. Note that $X_{2m+1}$ spans the centre of $\H_{2m+1}$, which is a characteristic ideal and hence $\ad(Y)$-invariant. So $[Y,X_{2m+1}]=\la X_{2m+1}$ for some $\la \in \R$. The Lie bracket on $\m$ is given by $[W_1,W_2]=\<KW_1,W_2\>X_{2m+1}$ for all $W_1, W_2 \in \m$, where $K$ is a nonsingular skew-symmetric operator on $\m$. Furthermore, there exist $A \in \End(\m)$ and a $1$-form $\omega$ on $\m$ such that for $W \in \m$ we have $[Y,W]=AW+\omega(W)X_{2m+1}$. The fact that $\ad(Y)$ is a derivation on $\H_{2m+1}$ gives $\<KAW_1,W_2\>+\<KW_1,AW_2\>=\la\<KW_1,W_2\>$, for all $W_1,W_2 \in \m$, that is, $KA+A^tK=\la K$. Then $A=\la \id -K^{-1}A^tK$. Thus $\Tr A= 2m\la -\Tr A$,
so $\Tr A=m\la$. Hence $\Tr \ad(Y)=\Tr A+ \la = (m+1)\la$. Note that  $\Tr \ad(X)=0$, for all $X \in \H_{2m+1}$. Hence $\g$ is unimodular if and only if $\la = 0$.

Suppose $\g$ is nonunimodular, so that $\la \ne 0$. Then for any nonzero $T \in \g$, we have $X_{2m+1} \in \Img \ad(T)$. Indeed, let $T=aY+W+bX_{2m+1} \ne 0$, with $W \in \m$. If $a \ne 0$, then we have $[T,(a \la)^{-1}X_{2m+1}]=X_{2m+1}$. If $a = 0$, but $W \ne 0$, then $[T,KW]=\|KW\|^2X_{2m+1}$. If $T=bX_{2m+1}$ with $ b \ne 0$, then $[T,Y]=-b \la X_{2m+1}$. It now follows from \eqref{eq:def} that every geodesic element of $(\g,\ip)$ is orthogonal to $X_{2m+1}$. In particular, the geodesic elements do not span the entire algebra $\g$.

Now suppose $\g$ is unimodular. Then $\la = 0$ and $X_{2m+1}$ belongs to the centre of $\g$. The quotient algebra $\g/ \Span(X_{2m+1})$ is unimodular and has a codimension one abelian ideal. So by Proposition \ref{P2}, every inner product on $\g/ \Span(X_{2m+1})$ has an orthonormal geodesic basis. Hence by Proposition \ref{P1}, every inner product on $\g$ has an orthonormal geodesic basis. \end{proof}

\section{Dimension $\le 4$}\label{s:dim4}

This section is devoted to the proof of Theorem \ref{T:dim4}. 
The classification of real Lie algebras of dimension $\leq 4$ is quite old and has been verified by several authors. Apart from the simple algebras $\sl(2,\R)$ and $\so(3,\R)$, and the corresponding reductive algebras $\sl(2,\R)\oplus \R$ and $\so(3,\R)\oplus \R$, the algebras are all solvable. We will use the classification of solvable Lie algebras of dimension $\leq 4$ by de Graaf   \cite{Graaf}. 
By Theorem \ref{T0}, we need only consider nonunimodular Lie algebras. The nonunimodular Lie algebras over $\R$ of dimension two and three  have a codimension one abelian ideal, so that are handled by Theorem \ref{th:oneabel}. For nonunimodular Lie algebras of dimension 4 over $\R$, one has the following complete list, in the notation of \cite{Graaf}:

\bigskip

\begin{enumerate}\itemsep2pt

\item[$M^2$:]\label{M2} $[X_1,X_2]=X_2$, $[X_1,X_3]=X_3$, $[X_1,X_4]=X_4$.

\item[$M^3_a$:]\label{M3} $[X_1,X_2]=X_2$, $[X_1,X_3]=X_4$, $[X_1,X_4]=-aX_3+(a+1)X_4$.

\item[$M^4$:]\label{M4} $[X_1,X_2]=X_3$, $[X_1,X_3]=X_3$.

\item[$M^6_{a,b}$:]\label{M6} $[X_1,X_2]=X_3$, $[X_1,X_3]=X_4$, $[X_1,X_4]=aX_2+bX_3+X_4$.

\item[$M^8$:]\label{M8} $[X_1,X_2]=X_2$, $[X_3,X_4]=X_4$.

\item[$M^9_{-1}$:]\label{M9} $[X_1,X_2]=X_2-X_3$, $[X_1,X_3]=X_2$, $[X_4,X_2]=X_2$, $[X_4,X_3]=X_3$.

\item[$M^{12}$:]\label{M12} $[X_1,X_2]=X_2$, $[X_1,X_3]=2X_3$, $[X_1,X_4]=X_4$, $[X_4,X_2]=X_3$.

\item[$M^{13}_a$:]\label{M13} $[X_1,X_2]=X_2+aX_4$, $[X_1,X_3]=X_3$, $[X_1,X_4]=X_2$, $[X_4,X_2]=X_3$.

\end{enumerate}

\bigskip

First note that for the algebras $M^2,M_a^3,M^4$ and $M^6_{a,b}$, the ideal $\Span(X_2,X_3,X_4)$ is abelian in each case.  
So each of the algebras $M^2,M_a^3,M^4$ and $M^6_{a,b}$ is of the form $\R^3_{\va}$, where $\va=\ad(X_1)$.
Hence by Theorem \ref{th:oneabel},  $M^2$ doesn't have an inner product with a geodesic basis, while  $M_a^3,M^4$ and $M^6_{a,b}$ do have such an inner product.

For $M^8$, the derived algebra is the abelian algebra $\Span(X_2,X_4)$, and by Theorem  \ref{th:rdiag},  $M^8$ has an inner product with a geodesic basis. In fact, an explicit example is easily furnished;  taking the
elements $X_1+X_4,X_2,X_2+X_3,X_4$ to be orthonormal, the elements $X_1,
X_1+X_4,X_3,X_2+X_3$ form a geodesic basis.

For $M^9_{-1}$, the unimodular kernel is $\Span(X_4-2X_1,X_2,X_3)$, which is isomorphic to the Lie algebra of the group of isometries of the Euclidean plane.
Observe that by Lemma \ref{Lemma1}, for any given inner product, every geodesic element in the ideal $\h=\Span(X_2,X_3,X_4)$ lies in the orthogonal complement of $\Span(X_2,X_3)$. If for some constants $a,b,c$, the   element $U=X_1+aX_2+bX_3+cX_4$ is geodesic, we would have
\begin{align*}
\<[X_3,U],U\>& =0\qquad \implies \qquad \<X_2+cX_3,U\>=0,\\
\<[X_2,U],U\>& =0\qquad \implies \qquad \<-(c+1)X_2+X_3,U\>=0,
\end{align*}
and so $\<X_2,U\>=-c\<X_3,U\>$ and $(c^2+c+1)\<X_3,U\>=0$.
Since $c^2+c+1$ has no roots in $\R$, we conclude that
$U$ is orthogonal to both $X_2$ and $X_3$. Thus all geodesics in $\g$ are orthogonal to $\Span(X_2,X_3)$. In particular, $M^9_{-1}$ doesn't have an inner product with a geodesic basis.

For the algebras $M^{12}$ and $M^{13}_a$, the unimodular kernel is $\Span(X_2,X_3,X_4)$, which is isomorphic to $\H_3$. So $\g$ doesn't have an inner product with a geodesic basis, by Theorem \ref{th:oneheis}(a).

\section{Dimension 5: Initial remarks}\label{s:dim4}

The remainder of the paper is devoted to the proof of Theorem \ref{T:dim5}.
Let $\g$ be a unimodular Lie algebra of dimension 5.

\begin{remark}\label{R5b}
 If the centre $\z$ of $\g$ is nontrivial, then the Lie algebra $\g/\z$ is unimodular and of dimension at most 4. So $\g/\z$ has an inner product with an orthonormal geodesic basis. Then by Proposition \ref{P1},  $\g$ has an inner product with an orthonormal geodesic basis. So we may assume that $\g$ has trivial centre.
\end{remark}

\begin{remark}\label{R5a}
 If $\g$ is a direct sum of proper ideals, then these ideals are unimodular and one of them, $\h$ say, must have dimension $\leq2$. So $\h$ is abelian, and hence $\h$ belongs to the centre of $\g$. So by the previous remark, we may assume that $\g$ is indecomposable.
\end{remark}

Given the above two remarks, for the remainder of the paper, $\g$ is an indecomposeable unimodular Lie algebra of dimension 5 with trivial centre.

\section{Dimension 5:  Nonsolvable algebras}
Suppose that $\g$ is nonsolvable.
The only semisimple Lie algebras of dimension $\leq 5$ are the 3-dimensional simple algebras $\so(3,\R)$ and $\sl(2,\R)$. So $\g$ has a two-dimensional radical, and hence as $\g$ is unimodular, its radical is isomorphic to $\R^2$. Let $\ve$ denote a Levi subalgebra of $\g$, so $\g$ is isomorphic to a semidirect product $\ve\ltimes \R^2$ determined by a homomorphism $\ve\to \sl(2,\R)$. It is easy to see that since $\so(3,\R)$ and $\sl(2,\R)$ are simple, there is no nontrivial Lie algebra homomorphism from $\so(3,\R)$ to $\sl(2,\R)$, and the only nontrivial Lie algebra homomorphisms from $\sl(2,\R)$ to $\sl(2,\R)$ are Lie algebra isomorphisms. Hence, up to isomorphism, there are only three possibilities for $\g$:
\[
\so(3,\R)\oplus \R^2,\qquad \sl(2,\R)\oplus \R^2,\qquad\sl(2,\R)\ltimes  \R^2,
\]
where in the third case, $\sl(2,\R)$ acts on $\R^2$ by its canonical linear action.
As $\g$ is  indecomposable, it remains to consider the third case.
Consider the basis $\{X_1,\dots, X_5\}$  for $\sl(2,\R)\ltimes \R^2$ defined as follows: 
\begin{align*}
X_1&=
\begin{pmatrix}
1&0\\
0&-1
\end{pmatrix},\quad
X_2=
\begin{pmatrix}
0&1\\
0&0
\end{pmatrix},\quad
X_3=
\begin{pmatrix}
0&0\\
1&0
\end{pmatrix},\quad
X_4=
\begin{pmatrix}
1\\
0
\end{pmatrix},\quad
X_5=
\begin{pmatrix}
0\\
1
\end{pmatrix}.
\end{align*}
The nonzero relations are:
\begin{align*}
[X_1,X_2]&=2X_2,\qquad[X_1,X_3]=-2X_3,\qquad [X_1,X_4]=X_4,\qquad[X_1,X_5]=-X_5,\\
[X_2,X_3]&=X_1,\qquad[X_3,X_4]=X_5,\qquad[X_2,X_5]=X_4.
\end{align*}

\begin{proposition}\label{P} The $5$-dimensional unimodular Lie algebra $\sl(2,\R)\ltimes \R^2$ possesses an inner product with a geodesic basis. 
\end{proposition}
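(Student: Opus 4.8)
The plan is to construct an explicit inner product together with an explicit geodesic basis, since none of the structural results above (Theorems~\ref{th:nilabel}, \ref{th:rdiag}, \ref{th:oneabel}, \ref{th:oneheis}) applies: the algebra $\g=\sl(2,\R)\ltimes\R^2$ is perfect (so that $\g'^\perp=0$) and has trivial centre, so the two elementary sources of geodesic elements are both unavailable. First I would pass to the basis adapted to the Cartan decomposition of the $\sl(2,\R)$ summand, namely $H=X_1$, $R=X_2-X_3$ (the generator of the rotation subgroup) and $S=X_2+X_3$, retaining $X_4,X_5$ for the $\R^2$ summand, and record the brackets in this basis; in particular $R$ and $S$ act on $\Span(X_4,X_5)$ as a rotation and a ``boost'' respectively.

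Next I would define the inner product $\ip$ for which $\{H,R,S,X_4,X_5\}$ is orthonormal (equivalently, in the original basis the Gram matrix is $\operatorname{diag}(1,\tfrac12,\tfrac12,1,1)$). The point of this choice is that each of $H,R,S$ is orthogonal to the image of its own adjoint map: a direct check shows that $\ad(H),\ad(R),\ad(S)$ each send the four complementary basis vectors into the span of the complementary vectors (for instance $\im\ad(R)=\Span(H,S,X_4,X_5)$) and annihilate themselves. Hence, by \eqref{eq:def}, $H$, $R$ and $S$ are three linearly independent geodesic elements lying in $\sl(2,\R)$. It then remains to produce two further geodesic elements whose $\R^2$-components are linearly independent. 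Here I would exhibit $X_2+X_5$ and $X_3+X_4$ and verify \eqref{eq:def} directly; after computing $\ad(X_2+X_5)$ and $\ad(X_3+X_4)$ the verification reduces to the single numerical cancellation $-2\<X_2,X_2\>+\<X_5,X_5\>=0$ (and its analogue $2\<X_3,X_3\>-\<X_4,X_4\>=0$), which is precisely why the Gram weight $\tfrac12$ on $X_2,X_3$ was chosen. Finally I would confirm that $\{H,R,S,X_2+X_5,X_3+X_4\}$ is linearly independent — its coordinate determinant in the basis $\{X_1,\dots,X_5\}$ equals $-2$ — so that these five geodesic elements form a geodesic basis.

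The main obstacle is the third step, namely constructing geodesic elements that meet the $\R^2$ summand. No nonzero vector $m\in\R^2$ is itself geodesic: testing \eqref{eq:def} with $Q=H$ and $Q=S$ forces $m_1^2=m_2^2$ and $m_1m_2=0$, hence $m=0$. (This reflects, via Lemma~\ref{l:crit}(b), that the $SL(2,\R)$-action on $\R^2\setminus\{0\}$ has no nonzero closed orbit.) One is therefore forced to use mixed vectors $Z=P+m$ with $P\in\sl(2,\R)$ and $m\in\R^2$, and for such $Z$ the condition \eqref{eq:def} splits into $m\perp\im(\ad(P)|_{\R^2})$ together with $\<P,[P,Q]\>=\<m,\ad(Q)m\>$ for all $Q\in\sl(2,\R)$. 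The first relation forces $\ad(P)|_{\R^2}$ to be singular, that is, $P$ nilpotent, which is exactly why the root vectors $X_2$ and $X_3$ appear, and the second relation then pins down the length of $m$. Isolating this mechanism — rather than the ensuing routine bracket computations — is the real crux of the argument.
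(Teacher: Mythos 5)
Your proposal is correct and takes essentially the same approach as the paper's proof: an explicit inner product together with an explicitly verified geodesic basis, built on the same cancellation mechanism of pairing the nilpotent root vectors $X_2$, $X_3$ with vectors of $\R^2$ so that the diagonal contribution of $\ad(X_1)$ in \eqref{eq:def} cancels. The differences are cosmetic: the paper keeps the standard metric (all $X_i$ orthonormal) and puts the scaling into the basis vectors, using $\{X_1,\ X_2\pm\sqrt{2}\,X_5,\ X_3\pm\sqrt{2}\,X_4\}$, whereas you put the weight $\tfrac12$ into the metric on $X_2,X_3$ and use two mixed vectors $X_2+X_5$, $X_3+X_4$ together with a geodesic basis $\{H,R,S\}$ of the Levi factor.
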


\begin{proof} 
 Consider the metric $\ip$ on $\g=\sl(2,\R)\ltimes \R^2$  for which
 $X_1,\dots,X_5$  are orthonormal. Clearly $X_1$ is geodesic.
Consider the elements 
\[Y_1=X_2+{\sqrt2}X_5,\quad
  Y_2=X_2-{\sqrt2}X_5\quad
 Y_3=X_3+{\sqrt2}X_4,\quad
  Y_4=X_3-{\sqrt2}X_4.
\]
Clearly $X_1,Y_1,\dots,Y_4$ span $\g$. 
We claim that the elements $Y_1,\dots,Y_4$ are geodesic. 
We have
\begin{enumerate}
\item $[X_1,Y_1]=2X_2-{\sqrt2}X_5$, 
\item $[X_2,Y_1]={\sqrt2}X_4$, 
\item $[X_3,Y_1]=-X_1$, 
\item $[X_4,Y_1]=0$, 
\item $[X_5,Y_1]=-X_4$, 
\end{enumerate}
These elements are all orthogonal to $Y_1$, so $Y_1$ is geodesic. Similar calculations show that $Y_2,Y_3,Y_4$ are geodesic.\end{proof}

We now consider orthonormal bases.

\begin{proposition}\label{p:noort}
The Lie algebra $\g=\sl(2,\R) \ltimes \R^2$ admits no inner product with an orthonormal basis of geodesic elements.
\end{proposition}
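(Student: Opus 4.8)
The plan is to argue by contradiction: assume $\ip$ is an inner product on $\g=\sl(2,\R)\ltimes\R^2$ admitting an orthonormal geodesic basis $\{E_1,\dots,E_5\}$ and derive a contradiction from the abelian nilradical $\n=\Span(X_4,X_5)$. The key structural facts are that $\n$ is $\ad$-invariant with $\g/\n\cong\sl(2,\R)$, that the induced map $\ad(\cdot)|_\n\colon\g\to\mathfrak{sp}(\n)\cong\sl(2,\R)$ is onto with kernel $\n$, and that it preserves the invariant area form $\Omega$ on $\n$ from the standard $\sl(2,\R)$-action. Throughout I write $\p=\n^\perp$, set $A_X=\ad(X)|_\n$ (a trace-free operator on $\n$) for $X\in\g$, and decompose $X=u+w$ with $u\in\p$, $w\in\n$.

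Two lemmas would isolate where geodesic elements can live. Testing the geodesic equation $\<X,[X,Y]\>=0$ against $Y\in\n$ gives, since $[X,Y]=A_XY\in\n$ and $u\perp\n$, the condition $A_X^{t}w=0$, the transpose being taken with respect to $\ip|_\n$. Hence if $w\neq0$ then $A_X^{t}$ is singular, so $A_X$ is singular and trace-free, hence nilpotent; equivalently $X$ lies on the null cone of the invariant form $\beta(X,Y)=\Tr(A_XA_Y)$, which has radical exactly $\n$ and signature $(2,1)$ on $\g/\n$. This is Lemma~A: a geodesic element with nonzero $\n$-component has nilpotent $A_X$. Lemma~B is that no nonzero element of $\n$ is geodesic: for $w\in\n$ the geodesic equation against $Y\in\g$ reads $\<A_Yw,w\>=0$, and as $Y$ ranges over $\g$ the $\ip$-symmetric part of $A_Y$ ranges over all trace-free symmetric operators on $\n$, which force $w=0$. (Equivalently, by Lemma~\ref{l:crit} the orbit $\mathrm{SL}(2,\R)w$ equals $\n\setminus\{0\}$, hence is not closed and carries no norm-critical point, so no nonzero $w\in\n$ is geodesic.)

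I would then run the dichotomy. Put $S=\{i:E_i\in\p\}$ and $T=\{i:w_i\neq0\}$. Since $\{E_i\}$ spans $\g$ the projections $\{w_i\}$ span $\n$, so $|T|\geq2$; moreover the moment-type identity $\sum_i w_i\otimes w_i=\id_\n$ holds, obtained by projecting $\sum_iE_i\otimes E_i=\id_\g$ to $\n$. By Lemma~B no $E_i$ lies in $\n$, and if three of the $E_i$ lay in the $3$-dimensional space $\p$ they would span $\p$, forcing the remaining two into $\p^\perp=\n$, which is impossible; hence $|S|\leq2$ and at least three basis vectors are $\beta$-null geodesic elements with nilpotent $A_{E_i}$ and $w_i\neq0$. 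For each such $i$, Lemma~A pins $w_i$ to the line $\ker A_{E_i}^{t}=(\Img A_{E_i})^\perp$. Testing $\<E_i,[E_i,Y]\>=0$ against $Y\in\p$ yields the residual conditions $\<E_i,[p,u_i]\>+\<A_pw_i,w_i\>=0$ for all $p\in\p$, coupling $u_i$, $w_i$ and the bracket on $\p$.

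The main obstacle is the endgame: showing that three mutually $\ip$-orthonormal $\beta$-null geodesic elements satisfying these residual equations cannot coexist while spanning $\g$. I expect this to be the computational heart, combining the moment identities $\sum_i w_i\otimes w_i=\id_\n$ and $\sum_i u_i\otimes u_i=\id_\p$ with the explicit description of $w_i$ as spanning $\ker A_{E_i}^{t}$ and with the coupling equations above; the obstruction is essentially that the null directions forced by Lemma~A resist orthonormalization. This is consistent with, and sharpens, Proposition~\ref{P}: the geodesic basis $X_1,Y_1,\dots,Y_4$ that does exist is manifestly non-orthonormal (for instance $\<Y_1,Y_2\>\neq0$), the $Y_j$ being exactly such $\beta$-null geodesic elements.
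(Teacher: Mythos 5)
Your preparatory work is sound as far as it goes: Lemma~A (a geodesic element with nonzero $\n$-component has nilpotent $A_X$, i.e.\ is what the paper calls \emph{singular}), Lemma~B (no nonzero element of $\n$ is geodesic, which is a special case of the paper's Lemma~\ref{l:nork1}), and the count showing that at most two basis vectors lie in $\p=\n^\perp$, hence at least three are singular with $w_i\ne 0$, are all correct. But the proof stops exactly where the proposition begins. The ``endgame'' you defer is not a computation to be filled in later; it is the entire content of the result. Observe that everything you have established is already satisfied by the non-orthonormal geodesic basis of Proposition~\ref{P}: for that inner product, one basis vector ($X_1$) lies in $\p$ and the other four ($Y_1,\dots,Y_4$) are singular geodesic elements with nonzero $\n$-components lying on the lines $\Ker A^t$, satisfying your residual equations. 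So your Lemmas A and B and the coupling conditions carry no obstruction by themselves; the contradiction must be extracted entirely from orthonormality, which is precisely the step you have not carried out.

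That step is where all the difficulty lives, as the paper's proof shows. First, your Lemma~A is strictly weaker than what is needed: it leaves open that one or two basis vectors inside $\p$ are \emph{nonsingular}, and eliminating that possibility is the paper's Lemma~\ref{l:nononsing}---a substantial case analysis (two nonsingular vectors; one nonsingular vector of canonical type $C_2$ or $C_3$ from \eqref{eq:canon}), each case killed by exhibiting explicit elements whose brackets violate the required perpendicularity. Second, even once all five basis vectors are known to be singular, the paper's contradiction is not obtained from three null vectors alone: it uses the geodesic and orthogonality conditions on $X_1$ and $X_2$ (normalized by automorphisms $\phi_T$) to force $\Span(X_3,X_4,X_5)=\Ker(\ve_{12}+2\ve_{23})\cap\Ker(\ve_{21}+2\ve_{13})$, then singularity to put $X_3,X_4,X_5$ in the family \eqref{eq:Xuv}, and finally a further geodesy argument forcing $u_i=-v_i$, so that $X_3,X_4,X_5$ become pairwise proportional---contradicting linear independence. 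In particular, it is doubtful that your proposed contradiction ``from three mutually orthonormal null elements'' exists at all without bringing in the remaining two basis vectors; three vectors do not span $\g$, and the interaction with the rest of the basis is essential in the paper's argument. Until you supply this step, you have a correct reduction, not a proof.
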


\begin{proof}
The algebra $\g$ is isomorphic to (and will be, throughout the proof, identified with) the Lie algebra of $3 \times 3$ matrices of zero trace having zero third row. We will denote the elements of $\g$ by $X=\big(\begin{smallmatrix} A & x \\ 0 & 0 \end{smallmatrix}\big)$, where $x \in \R^2$ and $A$ is a $2 \times 2$ matrix with $\Tr  A =0$. For $X_i=\big(\begin{smallmatrix} A_i & x_i \\ 0 & 0 \end{smallmatrix}\big) \in \g, \; i=1,2$, the Lie bracket and the Killing form $b$ on $\g$ are given by
\begin{equation}\label{eq:sl2bracket}
    [X_1, X_2]=\begin{pmatrix} [A_1,A_2] & A_1x_2-A_2x_1 \\ 0 & 0 \end{pmatrix} \, , \quad  b(X_1, X_2)=\Tr   (A_1A_2),
\end{equation}
(up to a multiple). Note that $b$ is degenerate. We call $X=\big(\begin{smallmatrix} A & x \\ 0 & 0 \end{smallmatrix}\big) \in \g$ \emph{singular} if $\det A = 0$, and \emph{nonsingular} otherwise (this is well-defined, as $\det A= -\frac12 b(X,X)$). The elements of $\g$ having $A=0$ form an abelian ideal $\ag= \R^2$.

It is easy to see that the conjugations $\phi_T:X \mapsto TXT^{-1}$, where $T=\big(\begin{smallmatrix} M & u \\ 0 & 1 \end{smallmatrix}\big)$, $M$ is a nonsingular $2 \times 2$ matrix and $u \in \R^2$, are automorphisms of $\g$. We have
\begin{equation}\label{eq:conjug}
    \phi_TX=\begin{pmatrix} MAM^{-1} & -MAM^{-1}u+Mx \\ 0 & 0 \end{pmatrix}\, .
\end{equation}

{\begin{lemma}\label{l:nork1}
No $X \in \g$ with $\rk X =1$ \emph{(}in particular, no $X \in \ag$\emph{)} is a geodesic element. 
\end{lemma}
\begin{proof} Any such $X$ belongs to $\Img \ad(X)$, which can be easily seen by reducing $X$ by a conjugation $\phi_T$ to a form with only the first row being nonzero. 
\end{proof}
}
From \eqref{eq:conjug} we can see that any $X \in \g$ with $\rk X = 2$ can be reduced by a scaling and a conjugation $\phi_T$ to one of the following canonical forms:
\begin{equation}\label{eq:canon}
    C_1=\begin{pmatrix} 0 & 1 & 0 \\ 0 & 0 & 1 \\ 0 & 0 & 0 \end{pmatrix}\, ,  \quad    C_2=\begin{pmatrix} 1 & 0 & 0 \\ 0 & -1 & 0 \\ 0 & 0 & 0 \end{pmatrix}\, , \quad    C_3=\begin{pmatrix} 0 & 1 & 0 \\ -1 & 0 & 0 \\ 0 & 0 & 0 \end{pmatrix}\, ,
\end{equation}
depending on whether $X$ is singular or nonsingular and whether, in the latter case, the eigenvalues of $A$ are real or imaginary, respectively. 

Suppose now that for some inner product $\ip$ on $\g$ there exists an orthogonal basis $\mathcal{B}$ of geodesic elements.

{\begin{lemma}\label{l:nononsing}
The basis $\mathcal{B}$ contains no nonsingular vectors. 
\end{lemma}
\begin{proof} 
Suppose that $X$ is a nonsingular geodesic element. It follows from \eqref{eq:sl2bracket} (and can be easily verified using \eqref{eq:canon}), that as $X$ is nonsingular,  $\dim \Img \ad(X) = 4$.  Note that $\Img \ad(X)$ lies in the orthogonal complement to $X$ relative to the Killing form $b$; indeed, if $X=\big(\begin{smallmatrix} A & x \\ 0 & 0 \end{smallmatrix}\big)$ and $Y=\big(\begin{smallmatrix} B & y \\ 0 & 0 \end{smallmatrix}\big)$, then $b(X,[X,Y])=\Tr(A\,[A,B])=\Tr(AAB)-\Tr(ABA)=0$. Thus, since  $\Img \ad(X)$ has dimension 4, $\Img \ad(X)$ equals the orthogonal complement to $X$ relative to the Killing form $b$.  Note that since $X$ is geodesic, $\Img \ad(X)$ lies in the orthogonal complement to $X$ relative to $\ip$. So, as  $\Img \ad(X)$ has dimension 4, $\Img \ad(X)$ also coincides with the orthogonal complement to $X$ relative to $\ip$.
In particular, as $\ag\subset \Img \ad(X)$, we have that $X$ is perpendicular to $ \ag$ relative to $\ip$. It follows that $\mathcal{B}$ cannot contain more than two nonsingular vectors. Indeed, if it contained three such vectors, then the other two would be in  $\ag$, thus contradicting Lemma~\ref{l:nork1}.

If $\mathcal{B}$ contains exactly two nonsingular vectors $X_i=\big(\begin{smallmatrix} A_i & x_i \\ 0 & 0 \end{smallmatrix}\big), \; i=1,2$, then $X_1,X_2$ are $\ip$-orthogonal, and hence from what we have just seen,  $X_1,X_2$ are $b$-orthogonal.
So $b(X_1,X_2)=\Tr   (A_1A_2) = 0$. Similarly,  for any other basis element $X=\big(\begin{smallmatrix} A & x \\ 0 & 0 \end{smallmatrix}\big) \in \mathcal{B}$, we have $\Tr  (A_1 A)= \Tr  (A_2 A)=0$. So $A$ is either zero or nonsingular (this follows from the nondegeneracy of the Killing form on $\sl(2,\R)$ or can be verified directly from the canonical forms \eqref{eq:canon}). The latter case is impossible by assumption, the former, by Lemma~\ref{l:nork1}.

It remains to suppose $\mathcal{B}$ contains exactly one nonsingular vector $X_1=\big(\begin{smallmatrix} A_1 & x_1 \\ 0 & 0 \end{smallmatrix}\big)$. Then, arguing as above, for any other basis vector $X =\big(\begin{smallmatrix} A & x \\ 0 & 0 \end{smallmatrix}\big) \in \mathcal{B}$ we have $\Tr   (A_1A)=0$ and $\det A=0$. Without loss of generality we can assume that $X_1$ has one of the canonical forms $C_2$ or $C_3$ from \eqref{eq:canon}. If $X_1=C_3$, then $A=0$, which contradicts Lemma~\ref{l:nork1}. If $X_1=C_2$, then up to scaling we have
\begin{equation} \label{eq:2sing}
X=\begin{pmatrix}
0 & 1 & c_1  \\
0 & 0 & c_2 \\
0 & 0 & 0 
\end{pmatrix} \text{ or }
X=\begin{pmatrix}
0 & 0 & d_1  \\
1 & 0 & d_2 \\
0 & 0 & 0
\end{pmatrix}.
\end{equation}
If $c_1 \ne 0$ in the first case of \eqref{eq:2sing}, then taking $Y=\Big(\begin{smallmatrix} 1 & 0 & 0 \\ 3c_2c_1^{-1} & -1 & -c_1 \\ 0&0&0\end{smallmatrix}\Big)$ we obtain $2X=[Y,X]+3c_2c_1^{-1}X_1$, which is a contradiction, as  $X $ is perpendicular to both $X_1$ and $\Img\ad(X)$. Similarly, if $d_2 \ne 0$ in the second case of \eqref{eq:2sing}, then for $Y=\Big(\begin{smallmatrix} 1 & -3 d_1 d_2^{-1} & d_2 \\ 0 &-1&0\\ 0&0&0 \end{smallmatrix}\Big)$ we get $[X,Y]-3d_1d_2^{-1} X_1=2X$, which is also a contradiction. It follows that the remaining four vectors of $\mathcal{B}$ up to scaling have the form
\begin{equation*} 
X_i=\begin{pmatrix}
0 & 1 & 0  \\
0 & 0 & \mu_i \\
0 & 0 & 0
\end{pmatrix}, \; i=2,3, \text{ and }
X_j=\begin{pmatrix}
0 & 0 & \nu_j  \\
1 & 0 & 0 \\
0 & 0 & 0
\end{pmatrix}, \; j=4,5.
\end{equation*}
But then $[C_2,X_2]=\Big(\begin{smallmatrix} 0 & 2 & 0 \\ 0 &0&-\mu_2\\ 0&0&0 \end{smallmatrix}\Big) \in \Span(X_2,X_3)$. As $X_2$ is perpendicular to both $X_3$ and $ [C_2,X_2]$, the vector $[C_2,X_2]$ must be a multiple of $X_3$, so $-\mu_2=2\mu_3$. A similar argument shows that $-\mu_3=2\mu_2$, so $\mu_2=\mu_3=0$, which is a contradiction. 
\end{proof}
}
It follows that all the elements $X_i \in \mathcal{B}$ are singular. Without loss of generality we can assume that $X_1=C_1$. Furthermore, at least one of the $X_i, \; i > 1$, has a nonzero $(1,1)$-entry. As it is singular and has zero trace, both its $(2,1)$ and $(1,2)$ entries must be nonzero. So up to scaling and relabelling we can assume that $X_2=\Big(\begin{smallmatrix} t & 1 & x \\ -t^2 &-t&y\\ 0&0&0 \end{smallmatrix}\Big)$, where $t \ne 0$. Then the conjugation $\phi_T$ with $T=\Big(\begin{smallmatrix} 1 & t^{-1} & -yt^{-2} \\ 0 &1&t^{-1}\\ 0&0&1 \end{smallmatrix}\Big)$ stabilises $X_1$ and $\phi_TX_2=\Big(\begin{smallmatrix} 0 & 0 & x+yt^{-1} \\ -t^2 &0&0\\ 0&0&0 \end{smallmatrix}\Big)$, so without loss of generality we can take $X_2=\Big(\begin{smallmatrix} 0 & 0 & a \\ 1 &0&0\\ 0&0&0 \end{smallmatrix}\Big)$, where $a \ne 0$ (by Lemma~\ref{l:nork1}). Furthermore, acting by $\phi_T$, with $T = \mathrm{diag}(a^{-2/3},a^{-1/3},1)$, and scaling by a factor of $a^{1/3}$, we get
\begin{equation*} 
X_1=\begin{pmatrix}
0 & 1 & 0  \\
0 & 0 & 1 \\
0 & 0 & 0
\end{pmatrix}, \qquad
X_2=\begin{pmatrix}
0 & 0 & 1  \\
1 & 0 & 0 \\
0 & 0 & 0
\end{pmatrix}.
\end{equation*}
Now for an arbitrary $X=\Big(\begin{smallmatrix} \alpha & \beta & \mu \\ \gamma &-\alpha & \nu\\ 0&0&0 \end{smallmatrix}\Big) \in \g$ we have
\begin{equation}\label{eq:imX1X2}
[X_1,X]=\begin{pmatrix}
\gamma & -2\alpha & \nu-\beta  \\
0 & -\gamma & \alpha \\
0 & 0 & 0
\end{pmatrix}, \qquad
[X_2,X]=\begin{pmatrix}
-\beta & 0 & -\alpha \\
2 \alpha & \beta & \mu-\gamma\\
0 & 0 & 0
\end{pmatrix}.
\end{equation}
Denote $\ve_{ij}, \; i=1,2,\, j=1,2,3$, the linear functionals on $\g$ such that $\ve_{ij}(X)$ is the $(i,j)$-th entry of $X$. As the basis $\mathcal{B}$ is orthogonal and geodesic, we have $X_1 \perp \Img \ad(X_1) \oplus \Span(X_2)$, so $\Span(X_2,X_3,X_4,X_5)=\Img \ad(X_1)\oplus \Span(X_2)$. Then from \eqref{eq:imX1X2} it follows that $\Span(X_2,X_3,X_4,X_5)=\Ker(\ve_{12}+2\ve_{23})$. Similar arguments applied to $X_2$ give $\Span(X_1,X_3,X_4,X_5)=\Ker(\ve_{21}+2\ve_{13})$. Therefore $\Span(X_3,X_4,X_5)=\Ker(\ve_{12}+2\ve_{23}) \cap \Ker(\ve_{21}+2\ve_{13})$. As all the $X_i$ are singular, we obtain, up to scaling,
\begin{equation}\label{eq:Xuv}
X_i=\begin{pmatrix}
2u_iv_i & 2u_i^2 & v_i^2  \\
-2v_i^2 & -2u_iv_i & -u_i^2 \\
0 & 0 & 0
\end{pmatrix}, \; i=3,4,5,
\end{equation}
where $(u_i,v_i) \ne (0,0)$. Let us just consider $X_3$ and for convenience we will drop the subscripts from $u_3$ and $v_3$. We have $X_3 \perp \Img\ad(X_3)+\Span(X_1,X_2)$. If 
\[
\Img\ad(X_3)+\Span(X_1,X_2) \supset \ag,
\] then for reasons of dimension, $\Img\ad(X_3)+\Span(X_1,X_2)=\ag \oplus\Span(X_1,X_2) =\Ker(\ve_{11})$, which is not possible, since for $Y=\Big(\begin{smallmatrix} 0 & 1 & 0 \\ 1 & 0 & 0 \\ 0&0&0\end{smallmatrix}\Big)$ we have $[X_3,Y]\in \Img\ad(X_3)$ but $\ve_{11}([X_3,Y])=2 (u^2+v^2) \ne 0$. Therefore
\[
\dim ((\Img\ad(X_3) +\Span(X_1,X_2))\cap \ag) \le 1.\]
Then for $Y=\Big(\begin{smallmatrix} 0 & 0 & v \\ 0 & 0 & u\\ 0&0&0\end{smallmatrix}\Big)$, the vectors
\begin{align*}
&[X_3,C_2]+4u^2X_1+4v^2X_2=\begin{pmatrix}
0 & 0 & 3v^2  \\
0 & 0 & 3u^2 \\
0 & 0 & 0
\end{pmatrix} \in (\Img\ad(X_3) +\Span(X_1,X_2))\cap \ag,\\
&[X_3,Y]=2 (u^2+v^2) \begin{pmatrix}
0 & 0 & u  \\
0 & 0 & -v \\
0 & 0 & 0
\end{pmatrix} \in (\Img\ad(X_3) +\Span(X_1,X_2))\cap \ag
\end{align*}
must be collinear, which implies $u=-v$. So from \eqref{eq:Xuv},
\begin{equation*}
X_3=u^2\begin{pmatrix}
-2 & 2 & 1  \\
-2 & 2 & -1 \\
0 & 0 & 0
\end{pmatrix}.
\end{equation*}
By the same reasoning, $X_4$ and $X_5$ have the same form. 
Hence $X_3,X_4,X_5$ are not linearly independent, which is a contradiction.
\end{proof}

\section{Dimension 5:  Solvable algebras}
The classification of 5-dimensional real Lie algebras due to Mubarakzjanov \cite{Mub1,Mub2} is possibly not as well known as the classification in dimension 4, but as far as we are aware, it is error free. We will only require it in certain cases; in many cases, we will give general arguments that don't rely on  this classification. 

Mubarakzjanov's classification is presented according to the nilradical $\n$ of $\g$. We follow the same presentation, and use the notation of \cite{Mub1,Mub2} where appropriate.
As $\g$ has dimension 5, we have $\dim(\n)\in\{3,4,5\}$ (see \cite[Theorem 5]{Mub1}). If $\dim(\n)=5$, then $\g$ is nilpotent and an inner product with an orthonormal  geodesic basis exists by Proposition \ref{Tnil}. So we are left with the cases $\dim(\n)=3$ and $\dim(\n)=4$. Up to isomorphism, there are only two nilpotent Lie algebras of dimension three ($\R^3$ and $\H_3$) and three nilpotent Lie algebras of dimension 4 ($\R^4, \H_3\oplus\R$ and $\m_0(4)$), where  $\m_0(4)$ is the filiform Lie algebra
having basis $\{X_1,\dots,X_4\}$ and relations $[X_1,X_2]=X_3,\ [X_1,X_3]=X_4$. 
So we have 5 subcases to consider.

\subsection{$\n\cong\R^4$.}
Here, since $\g$ is unimodular,  an inner product with an orthonormal basis exists by Proposition \ref{P2}.

\subsection{$\n\cong \H_3\oplus\R$.}
Choose a basis $\{X_0,X_1,\dots, X_4\}$  for $\g$ so that $\n=\Span(X_1,\dots, X_4)$, $\H_3=\Span(X_1,X_2,X_3)$ where $[X_1,X_2]=X_3$ and $\R=\Span(X_4)$. Let $\varphi=\ad(X_0)|_{\n}$. Since the centre $V:=\Span(X_3,X_4)$  of $\n$ and the derived algebra $\Span(X_3)$ of $\n$ are characteristic ideals of $\n$, they are invariant under the derivation $\varphi$ (see \cite[Chap.~1.1.3]{Bou}). Thus the matrix representation $A$ of $\varphi$ relative to the basis $\{X_1,\dots, X_4\}$ has the form
\begin{equation} \label{mrep1}
A=\begin{pmatrix}
a & b & 0 & 0 \\
c & d & 0 & 0 \\
\alpha  & \beta  &   \lambda   &   e     \\
\gamma & \delta      & 0    &f
\end{pmatrix}.
\end{equation}
Notice that as $\varphi$ is a derivation, applying $\varphi$ to $[X_1,X_2]$ gives
\begin{equation} \label{deri}
a+d=\lambda.
\end{equation}
Then, as $\varphi$ has zero trace, $f=-2\lambda$. Notice that $\lambda\not=0$ since otherwise $X_3$ is in the  centre of $\g$, contrary to the assumption that $\g$ has trivial centre.  

Suppose that $\g$ possesses an inner product for which there is an orthonormal geodesic basis 
$\{Y_0,Y_1,\dots, Y_4\}$. Consider an element $Y_i\not\in V$. If $Y_i\not\in \Span(X_1,\dots,X_4)$, then $[Y_i,X_3]$ is a nonzero multiple of $X_3$. If $Y_i\in \Span(X_1,\dots,X_4)$, then either $ [X_1,Y_i]$ or $[X_2,Y_i]$ is a nonzero multiple of $X_3$. 
So in all cases, as $Y_i$ is geodesic,  $Y_i$ is orthogonal to $X_3$. Thus there are at most four numbers $i$ in $\{0,1,\dots,4\}$ with $Y_i\not\in V$. If there are four of them, the corresponding $Y_i$ are all orthogonal to $X_3$; then since the elements $Y_0,\dots, Y_4$ are orthonormal, one of these is a multiple of $X_3$. But then $X_3$ would be geodesic, which is false as $\lambda\not=0$. So we may assume that there are precisely three numbers $i$ in $\{0,1,\dots,4\}$ with $Y_i\not\in V$; say $Y_0,Y_1,Y_2$. Hence $Y_3,Y_4\in V$, and so $V=\Span(Y_3,Y_4)$.
As we mentioned above, $V$ is $\varphi$-invariant. Thus, as $Y_3$ is geodesic, $\varphi(Y_3)$ is a multiple of $Y_4$, and similarly, $\varphi(Y_4)$ is a multiple of $Y_3$. In particular, $ \varphi|_{V}$ has zero trace. But this impossible as $ \Tr  \varphi|_{V}=\lambda+f=-\lambda\not=0$. So  $\g$ does not have an inner product with an orthonormal basis.

To see that in this case there is nevertheless an inner product with a (nonorthonormal) geodesic basis, we will employ \cite{Mub1,Mub2}. According to this classification, for unimodular Lie algebras with trivial centre, when $\n\cong \H_3\oplus\R$, the matrix $A$ of \eqref{mrep1} may be taken to be  one the following 4 forms:

    \begin{enumerate}
    \item $\g_{19}(\alpha)$ with $\alpha\not= -1$:
    \[
    A=\begin{pmatrix} 1 & 0 & 0 & 0 \\ 0 & \alpha & 0 &0  \\ 0 &  0 & 1+\alpha & 0 \\ 0 & 0 & 0 & -2(1+\alpha) \end{pmatrix}.\]

\item $\g_{23}$:
\[
A=\begin{pmatrix} 1 & 0 &0 &0  \\ 1 & 1 &0 & 0\\   0 & 0 & 2 & 0 \\  0 & 0 & 0 & -4 \end{pmatrix}.
\]

\item $\g_{25}(p)$ with $p\not=0$:
\[
A=\begin{pmatrix}  p & -1 &0 &0  \\  1 & p &0 & 0 \\ 0 & 0 &2p &  0 \\ 0 & 0 & 0 & -4p \end{pmatrix}.\]

\item $\g_{28}(-\frac 32)$:
\[A=\begin{pmatrix} -\frac 32 & 0 & 0 & 0 \\ 0 & 1 & 0 &0  \\ 0 & 0 & -\frac 12 & 0 \\ 0 & 1 & 0 & 1 \end{pmatrix}.
\]
 \end{enumerate}
For each of the above 4 cases, we take the inner product on $\g$ for which the elements $X_0,X_1,\dots,X_4$ are orthonormal.   
Then for $\g_{23}$ and $\g_{25}(p)$ it is easy to verify that the following elements form a geodesic basis:
\[
X_0,\ 2X_1+X_4,\
2X_2+X_4,\
\pm \sqrt{2} X_3+X_4,
\]
For $\g_{28}(-\frac 32)$, 
 the following elements form a geodesic basis:
\[
X_0,\ X_1\pm \sqrt{\frac32}X_2,\
\pm \sqrt{2} X_3+X_4.
\]
 For $\g_{19}(\alpha)$, we consider two subcases. If $\alpha\geq 0$,  the following elements form a geodesic basis:
\[
X_0,\ \sqrt{2(1+\alpha)}X_1+X_4,\
\sqrt{2(1+\alpha)}X_2+\sqrt{\alpha}X_4,\
\pm \sqrt{2} X_3+X_4.\]
If $\alpha<0$,  the following elements form a geodesic basis:
\[
X_0,\ \pm\sqrt{-\alpha}X_1+X_2,\
\pm \sqrt{2} X_3+X_4.\]

\subsection{$\n\cong \m_0(4)$.} 
In \cite[Example 1]{CLNN}, an example is given of a 5-dimensional unimodular Lie algebra with trivial centre with $\n\cong \m_0(4)$. As shown in \cite{CLNN}, this algebra has an inner product with a geodesic basis but it doesn't have an inner product with an orthonormal geodesic basis. We will prove that up to isomorphism, the algebra of \cite{CLNN} is the only 5-dimensional unimodular Lie algebra with trivial centre with $\n\cong \m_0(4)$.

Let $\g$ be such an algebra. Choose a basis $\{X_0,X_1,\dots, X_4\}$  for $\g$ so that $\n=\Span(X_1,\dots, X_4)$ and $[X_1,X_i]=X_{i+1}$ for $i=2,3$. Let $\varphi=\ad(X_0)|_{\n}$. Since the centre $\Span(X_4)$  of $\n$ and the derived algebra $\Span(X_3,X_4)$ of $\n$ are characteristic ideals of $\n$, they are invariant under the derivation $\varphi$. So the matrix representation of $\varphi$ relative to the basis $\{X_1,\dots, X_4\}$ has the form
\begin{equation} \label{mrep2}
\begin{pmatrix}
a & b & 0 & 0 \\
c & d & 0 & 0 \\
\alpha  & \beta  &  e   &   0     \\
\gamma & \delta      & f    &\lambda 
\end{pmatrix}.
\end{equation}
Notice that as $\varphi$ is a derivation, applying $\varphi$ to the relations $[X_1,X_2]=X_3, [X_2,X_3]=0,[X_1,X_3]=X_4$ gives respectively
\begin{equation} \label{deri}
a+d=e,\quad b=0,\quad a+e=\lambda,\quad\text{and}\quad f=\beta.
\end{equation}
Then, as $\varphi$ has zero trace, $4a+3d=0$. Since, by assumption, $\g$ has trivial centre, we have $\lambda\not=0$. 
So by rescaling $\varphi$ if necessary we may assume that  
$\varphi$ has the matrix representation 
\begin{equation} \label{mrep3}
\begin{pmatrix}
3 &0 & 0 & 0 \\
c & -4 & 0 & 0 \\
\alpha  & \beta  &  -1   &   0     \\
\gamma & \delta      & \beta    &2
\end{pmatrix}.
\end{equation}

Notice that $\varphi$ has 4 distinct real eigenvalues, $3,-4,-1,2$ and so it has corresponding eigenvectors $Y_1,\dots,Y_4$. With respect to the basis $\{Y_1,\dots,Y_4\}$ for $\n$, the  matrix representation of $\varphi$ is diagonal.
Note that as $\varphi$ is a derivation,
\[
\varphi([Y_1,Y_2])=[\varphi(Y_1),Y_2]+[Y_1,\varphi(Y_2)]=[3 Y_1,Y_2]+[Y_1,-4Y_2]=-[ Y_1,Y_2].\]
That is, $[Y_1,Y_2]$ is either zero or it is an eigenvector of $\varphi$ with eigenvalue $-1$. Hence $[Y_1,Y_2]$ is multiple of $Y_3$, say $[Y_1,Y_2]=\mu Y_3$. Similarly, $[Y_1,Y_3]$ is multiple of $Y_4$, say $[Y_1,Y_3]=\nu Y_4$. By the same reasoning, since $\varphi$ does not have $5,-5,-2$ or $1$ as an eigenvalue, $[Y_1,Y_4]=[Y_2,Y_3]=[Y_2,Y_4]=[Y_3,Y_4]=0$.
Then, since $\n$ has an element of maximal nilpotency, $\mu$ and $\nu$ must both be nonzero. By rescaling $Y_2$ and $Y_3$, we may take $\mu=\nu=1$.
The basis $\{X_0,Y_1,\dots,Y_4\}$ now has the same relations as the algebra of \cite[Example 1]{CLNN}.
So $\g$ is isomorphic to this algebra.

\subsection{$\n\cong \H_3$.} 
We will show that in this case the centre of $\g$ is nontrivial, contrary to our assumption.
Consider a basis $\{X_1,\dots, X_5\}$ for $\g$ where  $\n=\Span(X_3,X_4,X_5)$ and  $[X_3,X_4]=X_5$. Let $\varphi$ be a  derivation of $\n$ with zero trace. Since $\Span(X_5)$ is the centre of $\n$, the matrix representation of $\varphi$ relative to the basis $\{X_3,X_4, X_5\}$ has the form
\begin{equation} 
A=\begin{pmatrix}
a & b & 0 \\
c & d & 0  \\
e  & f  & g
\end{pmatrix}
\end{equation}
where $a+d+g=0$, and as $\varphi$ is a  derivation, $a+d=g$. Hence $g=0$ and so $\varphi(X_5)=0$. 
In particular $[X_1,X_5]=[X_2,X_5]=0$, and hence $X_5$ lies in the centre of $\g$. 

\subsection{$\n\cong \R^3$.} Consider a basis $\{X_1,\dots, X_5\}$ for $\g$ where  $\n=\Span(X_3,X_4,X_5)$. According to \cite{Mub1,Mub2}, for unimodular Lie algebras with trivial centre, when $\n\cong \R^3$, we may choose our basis so that $[X_1,X_2]=0$ and the relations may be taken to be one of the following two cases, relative to the basis $\{X_3,X_4,X_5\}$ for $\n$:
\begin{enumerate}
\item $\g_{33}$:\qquad $\ad(X_1)|_{\n}=\begin{pmatrix} 1 & 0 & 0 \\ 0 &  0 &0  \\ 0 & 0 & -1 \end{pmatrix}$, $\ad(X_2)|_{\n}=\begin{pmatrix} 0& 0 & 0 \\ 0 &  1 &0  \\ 0 & 0 & -1 \end{pmatrix}$, 
    \item $\g_{35}$:\qquad $\ad(X_1)|_{\n}=\begin{pmatrix} -2 & 0 & 0 \\ 0 & 1&0  \\ 0 & 0 & 1 \end{pmatrix}$, $\ad(X_2)|_{\n}=\begin{pmatrix} 0& 0 & 0 \\ 0 &  0 &1  \\ 0 & -1 & 0 \end{pmatrix}$.
\end{enumerate}

 For $\g_{35}$, consider  the inner product for which the basis $\{X_1,  X_2, Y_3=X_3+X_4, Y_4= X_3 - \frac12 X_4 + \frac{\sqrt{3}}2  X_5, Y_5= X_3 - \frac12 X_4 - \frac{\sqrt{3}}2  X_5\}$ is orthonormal. We have
 \begin{align*}
 [X_1,Y_3]&=-2X_3+X_4=-Y_4-Y_5,\\
 [X_1,Y_4]&=-2X_3-\frac12X_4+\frac{\sqrt{3}}2  X_5=-Y_3-Y_5,\\
[X_1,Y_5]&=-2X_3-\frac12X_4-\frac{\sqrt{3}}2  X_5=-Y_3-Y_4,\\
 [X_2,Y_3]&=-X_5=\frac1{\sqrt3}(-Y_4+Y_5),\\
 [X_2,Y_4]&=\frac{\sqrt{3}}2  X_4+\frac12X_5=\frac1{\sqrt3}(Y_3-Y_5),\\
[X_2,Y_5]&=-\frac{\sqrt{3}}2  X_4-\frac12X_5=\frac1{\sqrt3}(-Y_3-Y_4).
 \end{align*}
 So this basis is geodesic.
 
Since the algebra $\g_{33}$ has an abelian nilradical $\n$,  it has an inner product with a geodesic basis, by Theorem \ref{th:nilabel}.
It remains to prove that  $\g_{33}$ does not admit an inner product with an orthonormal geodesic basis. Suppose by way of contradiction, that it does admit an orthonormal geodesic basis $\mathcal B=\{Y_1,\dots,Y_5\}$.

First suppose that three of the basis elements belong to $\g'$, say $\g'=\Span(Y_3,Y_4,Y_5)$. Relative to this basis, let $A=(A_{ij}),B=(B_{ij})$ denote the matrix representations of the maps $\ad(X_1)|_{\g'},\ad(X_2)|_{\g'}$ respectively. Note that as the basis elements are geodesic, $A,B$ must have zeros on the diagonal. As $[A,B]=0$, an easy calculation shows that the three two-dimensional vectors $(A_{12}, B_{12}), (A_{23}, B_{23})$ and $(A_{31}, B_{31})$ are collinear. It follows that a certain nontrivial linear combination $C$ of $A$ and $B$ has $C_{12}=C_{23}=C_{31}=0$, and all zeros on the diagonal. The eigenvalues of such a matrix are $r, r \omega$ and $r \omega^2$, where $r \in \mathbb{R}$ and $\omega$ is a nonreal cubic root of unity. This is a contradiction, as any nontrivial linear combination of $A$ and $B$ has real eigenvalues, which are not all zeros.
As the orthogonal complement $\g'^\perp$ is 2-dimensional, it cannot contain two of the elements of $\mathcal B$ or the other three would be in $\g'$. So we have at most two basis elements in $\g'$ and at most one basis element in $\g'^\perp$.

Consider a geodesic unit vector of the form $Y= \sum_{i=1}^5 a_i X_i$ with $Y\not\in\g'$.
If  $a_1,a_2$ and $a_1+a_2$ all nonzero, then the image of $\ad(Y)$ is $\g'$, and so $Y$ is in  $\g'^\perp$.
Now suppose that $Y\not\in\g'$ and $Y\not\in\g'^\perp$. So either $a_1,a_2$ or $a_1+a_2$ is zero.
If $a_2=0$, then $a_4=0$, as otherwise the image of $\ad(Y)$ would be $\g'$ and we would have $Y\in\g'^\perp$.
Note that as $[Y,X_3]=a_1X_3, [Y,X_5]=-a_1X_5$, we have $Y \in \Span(X_1,X_3,X_5)$ and $Y$ is orthogonal to $\Span(X_3,X_5)$. Up to the sign, there is only one unit vector, $Z_1$ say, in $\Span(X_1,X_3,X_5) \cap (\Span(X_3,X_5))^\perp$, so  $Y=\pm Z_1$. Similarly, if $a_1=0$, then $Y=\pm Z_2$, where $Z_2$ is a unit vector in $\Span(X_2,X_4,X_5) \cap (\Span(X_4,X_5))^\perp$. If  $a_1+a_2=0$, then $Y=\pm Z_3$, where $Z_3$ is a unit vector in  $\Span(X_1-X_2,X_3,X_4) \cap (\Span(X_3,X_4))^\perp$.  So we have shown that if $Y$ is a geodesic unit vector and $Y\not\in\g'\cup\g'^\perp$, then $ Y\in\pm \{Z_1,Z_2,Z_3\}$. Notice however that we have not yet excluded the possibility that some of the elements $Z_1,Z_2,Z_3$ belong to $\g'^\perp$.  In summary, so far we have that, of the five elements of $\mathcal B$, at most  two are in $\g'$, at most one is in $\g'^\perp$ and, up to signs, the rest belong to $\{Z_1,Z_2,Z_3\}$.
So at least two vectors in $\pm\{Z_1,Z_2,Z_3\}$ belong to $\mathcal B$.

Let $W_1=\Span(X_3,X_5), W_2=\Span(X_4,X_5), W_3=\Span(X_3,X_4)$. So $Z_i$ is orthogonal to $W_i$, for $i=1,2,3$. Note that none of the elements $X_3,X_4,X_5$ are geodesic, since they are each eigenvectors of $\ad(X_1)$ or $\ad(X_2)$. So
 if one of the elements of $\mathcal B$, say $Y_5$, lies in $ \g'$, then $Y_5$ lies in the complement of two of the spaces $W_i$; suppose, for example, that $Y_5\in \g'\backslash(W_1\cup W_2)$. Then $Z_1$ is orthogonal to both $W_1$ and $Y_5$ and hence $Z_1\in \g'^\perp$, and by the same reasoning, $Z_2\in \g'^\perp$. So we conclude that one of the elements of $\mathcal B$ in $\pm\{Z_1,Z_2,Z_3\}$ is in $\g'^\perp$. Hence, since at most one one of the elements of $\mathcal B$ is in $\g'^\perp$,  exactly  two of the elements of $\mathcal B$ are in $\g'$, and all the elements  $\pm\{Z_1,Z_2,Z_3\}$ belong to $\mathcal B$. But then, by the argument we just used, two of these latter elements are in 
$\g'^\perp$, which is a contradiction.

This concludes the proof of Theorem \ref{T:dim5}.

\bibliographystyle{amsplain}
\bibliography{geod}

\end{document}